\newcounter{minutes}
\newcounter{hours}
\DeclareMathOperator{\arcsec}{arcsec}
\newtheorem{lemma}{Lemma}[section]
\newtheorem{theorem}{Theorem}[section]
\newtheorem{remark}{Remark}[section]
\newcommand{\airya}{\operatorname{Ai}}
\newcommand{\airyb}{\operatorname{Bi}}
\keywords{Bessel differential equation; zeros of Bessel functions; zeros of Bessel functions derivatives; zeros of Airy functions; McMahon-type asymptotic expansions; uniform asymptotic expansion}
\subjclass[2020]{33C10, 34E05, 34E20.}
\begin{document}
	\title[Asymptotic Behavior of zeros of Bessel function derivatives]{Asymptotic Behavior of zeros of Bessel function derivatives}
	\allowdisplaybreaks
	\author[\'A. Baricz]{\'Arp\'ad Baricz}
	\address{Department of Economics,  Babe\c{s}-Bolyai University, Cluj-Napoca 400591, Romania}
	\address{Institute of Applied Mathematics, \'Obuda University, 1034 Budapest, Hungary}
	\email{bariczocsi@yahoo.com}
	
	\author[P. Kumar]{Pranav Kumar}
	\address{Department of Mathematics,
		Indian Institute of Technology Madras, Chennai 600036, India}
	\email{pranavarajchauhan@gmail.com}
	
	\author[S. Ponnusamy]{Saminathan Ponnusamy}
	\address{Department of Mathematics,
		Indian Institute of Technology Madras, Chennai 600036, India}
	\email{samy@iitm.ac.in}
	
	\newcommand{\ec}{\operatorname{erfc}}
	
	\def\thefootnote{}
	\footnotetext{ \texttt{File:~\jobname .tex,
			printed: \number\year-\number\month-\number\day,
			\thehours.\ifnum\theminutes<10{0}\fi\theminutes}
	} \makeatletter\def\thefootnote{\@arabic\c@footnote}\makeatother
	
	\maketitle
	
\begin{abstract}		
We derive two distinct asymptotic expansions for the zeros $j_{\nu,k}^{(n)}$ of the $n$-th derivative of Bessel function $J_\nu^{(n)}(x)$. The first is a McMahon-type expansion for the case when $k \to \infty$ with fixed $\nu$, for which we also establish an explicit error bound. The second addresses the case when $\nu \to \infty$ with fixed $k$ and it involves the zeros of Airy functions and their derivatives. These results extend and refine the classical work of Wong, Lang, and Olver on the zeros of Bessel functions. In the course of obtaining our main results, we also generalize several auxiliary results, which in turn provide a broader framework for the study of zeros of special functions.
\end{abstract}

\section{\bf Introduction}

The zeros of Bessel functions play a significant role in various problems across applied mathematics and mathematical physics. Their significance is apparent in areas such as quantum mechanics, scattering theory, wave propagation, and related fields (see \cite[Secs. 10.72 and 10.73] {OLBC10},  \cite{DYL6,ELR93,FS08,LZ07,Pa72} and the references therein). Because of their wide-ranging applicability, considerable attention has been devoted to understanding the asymptotic behavior of Bessel functions and their zeros. Such asymptotic information is useful for estimating the zeros of Bessel functions for large parameters, which frequently arise in both theoretical and practical contexts.
	
One of the earliest systematic studies of the asymptotic expansion of Bessel functions' zeros was carried out by the Irish mathematician James McMahon \cite{Mc1895}, who derived fundamental expansions and demonstrated their important applications in physics. In particular, if $j_{\nu,k}$ denote the $k$-th positive zero of Bessel functions $J_\nu(x)$, then for large $k$ and fixed order $\nu > 0$, McMahon obtained the classical asymptotic formula
\begin{equation}\label{Nemes_j}
j_{\nu,k}\sim\left(k+\frac{1}{2}\nu-\frac{1}{4}\right)\pi+\sum_{s=0}^{\infty}\frac{c_s(\nu)}{\left[\left(k+\frac{1}{2}\nu-\frac{1}{4}\right)\pi\right]^{2s+1}} \quad (k\to \infty),
\end{equation}
where the coefficients $c_s(\nu)$ are polynomials in $\nu$. Extending the work of Schafheitlin, Watson \cite[Sec. 15.33]{Wa44} established a lower bound for the positive zeros of $J_\nu(z)$
\begin{align*}
\left(k+\frac{1}{2}\nu-\frac{1}{4}\right)\pi<j_{\nu,k}
\end{align*}
for $k\in\mathbb{N}$ and $-\frac{1}{2}<\nu<\frac{1}{2}$. This estimate was later sharpened by F\"orster and Petras \cite{FP93}. Recently, Nemes \cite{Ne21} settled two longstanding conjectures regarding the enveloping properties of real zeros of cylinder and Airy functions, originally posed by Elbert and Laforgia \cite{EL01} and by Fabijonas and Olver \cite{FO99}, respectively. Nemes further proved that $c_s(\nu)$, in \eqref{Nemes_j}, is indeed a polynomial in $\nu$ of degree $2s$.
	
Now, let us discuss the asymptotic expansion of $j_{\nu,k}$ where $\nu \to \infty$ with $k$ fixed. Let $a_k$ denote the $k$-th negative zero of the Airy function $\airya(x)$, ordered so that $|a_1| < |a_2| < {\ldots}.$ Then, as given in \cite[eq.~10.21.32]{OLBC10},
\begin{align*}
j_{\nu,k}\sim\nu\sum_{x=0}^{\infty}\frac{\alpha_s}{\nu^{\frac{2\nu}{3}}}\quad(\nu\to\infty),
\end{align*}
where each coefficient $\alpha_s$ is a polynomial in $a_k$. This result arises naturally from Olver's \cite[Sec. 7]{Ol54} uniform asymptotic expansion of  $j_{\nu,k}$ as $\nu \to \infty$  \cite{Ol54}, which provides a systematic framework to use the expansion for Bessel differential equation to derive uniform asymptotic expansions of $J_\nu(x)$ in terms of Airy functions and their derivatives, and from these obtained
\begin{align*}
j_{\nu,k}\sim\nu\sum_{s=0}^{\infty}\frac{z_{k,s}}{\nu^{2s}}\quad (\nu\to\infty),
\end{align*}
where $z_{k,s}$ are given implicitly for $s>1$. Later, Wong and Lang \cite{WL90} extended these ideas to obtain asymptotic expansions for the zeros of $J_\nu''(x)$ in two cases: $\nu \to \infty$ with fixed $k$, and $k \to \infty$ with fixed $\nu$. Their method combined with the application of Bessel differential equation by using asymptotic expansions of $J_\nu(x)$ and its derivatives, ultimately reduces the analysis to Olver's turning point framework.

These advances in asymptotic theory are deeply connected with the classical study of the zeros of Bessel functions - a subject that drew the attention of eminent mathematicians such as Bessel, Euler, Fourier, Lommel, Rayleigh, and Stokes. Their pioneering investigations laid the foundation for much of the modern theory, for an extensive historical account, see \cite{Ke14} and the references therein.
	
In recent years, this area continues to develop, with new insights emerging from the interplay of classical analysis, asymptotic methods, and modern function theory. The first author together with Kokologiannaki and Pog\'any \cite{BKP18}, proved that for $\nu > n-1$, all the zeros of the $n$-th derivative $J_\nu^{(n)}(x)$ are real and positive, and moreover, that the zeros of $J_\nu^{(n)}(x)$ and $J_\nu^{(n+1)}(x)$ interlace whenever $\nu \geq n$. Subsequently, Frantzis et. al. \cite{FKP24} investigated the monotonicity of the zeros $j_{\nu,k}^{(n)}$ with respect to $\nu$, providing further structural understanding. Most recently, Dimitrov and Lun \cite{DL25} employed Mittag-Leffler expansions of $J_\nu^{(n)}(x)$ and Jensen's characterization of entire functions in the Laguerre-P\'olya class to analyze the monotonicity and distribution of these zeros. Study of the zeros of other special functions, like Wright, hyper-Bessel functions etc., are also studied in details (see \cite{BS18} and references therein).
	
Despite significant progress in understanding the properties of the zeros of the $n$-th order derivative of Bessel functions, the asymptotic expansions of these zeros have not been explored in the literature. This paper aims to fill this gap by presenting McMahon-type asymptotic expansions and by analyzing the case where $\nu \to \infty$ with fixed $n$. We employ the standard approach introduced by McMahon \cite{Mc1895} to derive the asymptotic expansion of $J_\nu^{(n)}(x)$ and also highlight an alternate method to obtain similar results. Subsequently, by using the properties of the zeros of the $n$-th derivative of Bessel functions \cite{BKP18}, denoted by $j_{\nu,k}^{(n)}$, we establish the asymptotic behavior of $j_{\nu,k}^{(n)}$ as $k \to \infty$.
For the second case, namely, $\nu \to \infty$ with fixed $k$, we adapt the method of Wong and Lang \cite{WL91}, with suitable modifications, in combination with Hethcote’s theorem \cite[Theorem 1]{He70}, to obtain an initial approximation of $j_{\nu,k}^{(n)}$ for large $\nu$. Finally, we employ the Bessel differential equation together with Olver’s method to derive the full asymptotic expansion of $j_{\nu,k}^{(n)}$ for $\nu \to \infty$ and finite $k$. Throughout this paper, if not stated otherwise, empty sums are taken to be zero. Additionally, $\mathbb{N}$ is the set of all positive integers and $\mathbb{N}_0= \mathbb{N}\cup\{0\}$.
	
\section{\bf McMahon-type expansion for zeros of Bessel functions derivatives}	
\subsection{Some initial results}
In this section we will present some basic results on expressing even and odd derivatives of Bessel functions, for large $x$, in terms of $sine$ and $cosine$ functions. The method of the proof is similar to that used by McMahon \cite{Mc1895} along with the use of mathematical induction. It is interesting to note that similar results can also be obtained by expressing the $n$-th order derivatives of the Bessel function in terms of Bessel function and its derivatives (see \ref{J^n=J+J'}) and then substituting the expansion for Bessel function or its first derivative from \cite{Mc1895}.

\begin{theorem}\label{Theorem1}
For $n\in\mathbb{N}_0$ and large $x$, the $2n$-th derivative of the Bessel function of first kind $J_\nu(x)$, can be expressed as
\begin{equation}\label{2n_der}
\begin{split}
	\sqrt{\frac{1}{2}\pi x}J_\nu^{(2n)}(x)&=\cos \left(x-\frac{\nu\pi}{2}-\frac{\pi}{4}\right)\tau_\nu^{(2n)}(x)+
	\sin\left(x-\frac{\nu\pi}{2}-\frac{\pi}{4}\right)\theta_\nu^{(2n)}(x),
\end{split}
\end{equation}
where
$$\tau_\nu^{(2n)}(x)=\sum_{m=0}^{\infty}\frac{\alpha_{2m,2n}}{x^{2m}},\ \theta_\nu^{(2n)}(x)=\sum_{m=0}^{\infty}\frac{\alpha_{2m+1,2n}}{x^{2m+1}},\ \alpha_{2m,2n}=\alpha_{2m,2n-1}-\frac{4m-1}{2}\alpha_{2m-1,2n-1}$$
and
\begin{align*}\label{}
\alpha_{2m+1,2n}=-\alpha_{2m-1,2n-1}-\frac{4m+1}{2}\alpha_{2m,2n-1}.
\end{align*}
Moreover, the $(2n+1)$-th derivative of $J_\nu(x)$ can be expressed as
\begin{equation}\label{2n+1_der}
\begin{split}
	\sqrt{\frac{1}{2}\pi x}J_\nu^{(2n+1)}(x)&=\cos \left(x-\frac{\nu\pi}{2}-\frac{\pi}{4}\right)\theta_\nu^{(2n+1)}(x)+
	\sin\left(x-\frac{\nu\pi}{2}-\frac{\pi}{4}\right)\tau_\nu^{(2n+1)}(x),
\end{split}
\end{equation}
where
$$\tau_\nu^{(2n+1)}(x)=\sum_{m=0}^{\infty}\frac{\alpha_{2m,2n+1}}{x^{2m}},\ \theta_\nu^{(2n+1)}(x)=\sum_{m=0}^{\infty}\frac{\alpha_{2m+1,2n+1)}}{x^{2m+1}},$$
\begin{equation}\label{rec_rln_odd}
\alpha_{2m,2n+1}=-\frac{4m-1}{2}\alpha_{2m-1,2n}- \alpha_{2m,2n}
\end{equation}
and
\begin{equation}\label{rec_rln_odd1}
\alpha_{2m+1,2n+1}=\alpha_{2m+1,2n}-\frac{4m+1}{2}\alpha_{2m,2n}.
\end{equation}
In particular for $n=0$, we obtain that
$$\alpha_{2m+1,0}=(-1)^{m+1}A_{2m+1}(\nu)\quad \mbox{and} \quad \alpha_{2m,0}=(-1)^mA_{2m}(\nu)$$
with
$$A_s(\nu)=\frac{(4\nu^2-1)(4\nu^2-3^2)\cdots\left(4\nu^2-(2s-1)^2\right)}{s!8^s}.$$
\end{theorem}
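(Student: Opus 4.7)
The plan is to establish \eqref{2n_der} and \eqref{2n+1_der} simultaneously by induction on $n$, driven by a single clean recursion for the rescaled derivatives
$$F_n(x) := \sqrt{\tfrac{\pi x}{2}}\, J_\nu^{(n)}(x).$$
Since $\tfrac{d}{dx}\sqrt{\pi x/2} = \tfrac{1}{2x}\sqrt{\pi x/2}$, differentiating $F_n$ gives
$$F_{n+1}(x) = F_n'(x) - \frac{F_n(x)}{2x}.$$
Thus the entire induction reduces to showing that the McMahon-type Ansatz is preserved by the linear operator $L = \tfrac{d}{dx} - \tfrac{1}{2x}$, and that the resulting coefficient recurrences coincide with those claimed. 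This is the approach already alluded to in the paragraph preceding the theorem: start from McMahon's classical expansion and iterate.

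The base case $n=0$ of \eqref{2n_der} is precisely the classical McMahon expansion of $J_\nu(x)$ (see \cite[10.17.3]{OLBC10}); matching that expansion against the Ansatz delivers the closed forms $\alpha_{2m,0}=(-1)^m A_{2m}(\nu)$ and $\alpha_{2m+1,0}=(-1)^{m+1}A_{2m+1}(\nu)$ recorded at the end of the theorem. For the inductive step, set $\omega = x - \tfrac{\nu\pi}{2} - \tfrac{\pi}{4}$ so that $\omega' = 1$, and apply $L$ to
$$F_{2n} = \cos\omega\cdot \tau_\nu^{(2n)}(x) + \sin\omega\cdot\theta_\nu^{(2n)}(x).$$
The product rule together with $(\cos\omega)' = -\sin\omega$ and $(\sin\omega)' = \cos\omega$ yields
$$F_{2n+1}=\cos\omega\bigl[\theta_\nu^{(2n)}+L\tau_\nu^{(2n)}\bigr]+\sin\omega\bigl[-\tau_\nu^{(2n)}+L\theta_\nu^{(2n)}\bigr],$$
which is already of the shape required by \eqref{2n+1_der}. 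Reading off the coefficient of $x^{-(2m+1)}$ in the $\cos\omega$-bracket gives \eqref{rec_rln_odd1}, and the coefficient of $x^{-2m}$ in the $\sin\omega$-bracket gives \eqref{rec_rln_odd}. An entirely analogous application of $L$ to $F_{2n+1}$ then produces $F_{2n+2}$ in the form \eqref{2n_der} (with $n$ replaced by $n+1$), and matching coefficients recovers the two recurrences for $\alpha_{2m,2n}$ and $\alpha_{2m+1,2n}$ stated immediately above \eqref{rec_rln_odd}.

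The only delicate point is parity bookkeeping: a $\tau$-series carries only even powers of $1/x$ while a $\theta$-series carries only odd powers, and both $d/dx$ and multiplication by $1/x$ raise the exponent of $1/x$ by one, so $L$ swaps parities. After one application of $L$ the expansion of $F_{n+1}$ is recovered by re-pairing sines and cosines, which accounts for the alternating roles of $\tau$ and $\theta$ in \eqref{2n_der} versus \eqref{2n+1_der}; the index shift $m \mapsto m-1$ arising from differentiating $x^{-2m}$ supplies the cross-terms of the form $\alpha_{2m-1,\cdot}$ that appear in the recurrences. Carrying out this parity/index bookkeeping correctly is the main obstacle; the rest is routine algebra on formal power series. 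An alternative derivation, also mentioned in the paper, substitutes McMahon's expansions of $J_\nu$ and $J_\nu'$ into a representation $J_\nu^{(n)}(x) = p_n(x) J_\nu(x) + q_n(x) J_\nu'(x)$, but the inductive argument above is the more direct route and yields the recurrences in the cleanest form.
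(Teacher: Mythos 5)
Your proposal is correct and follows essentially the same route as the paper: the paper's step of dividing by $\sqrt{x}$, differentiating, and multiplying back by $\sqrt{x}$ is exactly your operator $L=\tfrac{d}{dx}-\tfrac{1}{2x}$ acting on $F_n=\sqrt{\pi x/2}\,J_\nu^{(n)}(x)$, and both arguments proceed by induction on $n$ from the McMahon/Wong--Lang expansions of $J_\nu$ and $J_\nu'$, matching coefficients of $x^{-2m}$ and $x^{-(2m+1)}$ to read off the recurrences. Your packaging of the step as a single linear operator is a slightly cleaner presentation of the identical computation.
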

	
\begin{remark}\label{remark1}
{\em We can write \eqref{2n_der} and \eqref{2n+1_der} for large $x$ as
$$\sqrt{\frac{1}{2}\pi x}J_\nu^{(2n)}(x)=\alpha_{0,2n}\cos \left(x-\frac{\nu\pi}{2}-\frac{\pi}{4}\right)+\mathcal{O}\left(\frac{1}{x}\right)$$
and
$$\sqrt{\frac{1}{2}\pi x}J_\nu^{(2n+1)}(x)=\alpha_{0,2n+1}\sin\left(x-\frac{\nu\pi}{2}-\frac{\pi}{4}\right)+\mathcal{O}\left(\frac{1}{x}\right),$$
respectively. These expressions represent the oscillatory nature of the higher order derivatives of Bessel functions of first kind. These results also align with the asymptotic form of $n$th derivative of Bessel functions obtained by E.A. Skelton \cite[Eq. (1)]{Sk02}.}
\end{remark}
	
\begin{proof}[\bf Proof of Theorem \ref{Theorem1}]
Recall that the asymptotic relations for Bessel functions and their first derivatives \cite[Eqs. (2.4) and (2.5)]{WL90}, can be expressed, respectively, as
\begin{equation}\label{Bsl_asym}
\sqrt{\frac{1}{2}\pi x}J_\nu(x)=\cos(x-\alpha)\tau_\nu^{(0)}(x)+\sin(x-\alpha)\theta_\nu^{(0)}(x)
\end{equation}
and
\begin{equation}\label{Bsl_der_asym}
\sqrt{\frac{1}{2}\pi x}J_\nu^{(1)}(x)=\cos(x-\alpha)\theta_\nu^{(1)}(x)+\sin(x-\alpha)\tau_\nu^{(1)}(x),
\end{equation}
where
$$\tau_\nu^{(0)}(x)\sim\sum_{m=0}^{\infty}\frac{\alpha_{2m,0}}{x^{2m}},\ \theta_\nu^{(0)}(x)\sim\sum_{m=0}^{\infty}\frac{\alpha_{2m+1,0}}{x^{2m+1}},\
\theta_\nu^{(1)}(x)\sim\sum_{m=0}^{\infty}-\frac{4\nu^2+4(2m+1)^2-1}{4\nu^2-(4m+1)^2}\frac{\alpha_{2m,0}}{x^{2m+1}},$$
$$\tau_\nu^{(1)}(x)\sim\sum_{m=0}^{\infty}\frac{4\nu^2+16m^2-1}{4\nu^2-(4s-1)^2}\frac{\alpha_{2m+1,0}}{x^{2m}},\
\alpha_{2m+1,0}=(-1)^{m+1}A_{2m+1}(\nu),\ \alpha_{2m,0}=(-1)^mA_{2m}(\nu).$$
We write the expressions for $\tau_\nu^{(1)}(x)$ and $\theta_\nu^{(1)}(x)$ as
$$\tau_\nu^{(1)}(x)\sim\sum_{m=0}^{\infty}\frac{\alpha_{2m+1,1}}{x^{2m+1}}\quad \mbox{and} \quad
\theta_\nu^{(1)}(x)\sim\sum_{m=0}^{\infty}\frac{\alpha_{2m,1}}{x^{2m}}.$$
Notice that for $n=0$, the asymptotic expansion of Bessel functions and its first derivative can be derived from \eqref{2n_der} and \eqref{2n+1_der}, respectively, which align with \eqref{Bsl_asym} and \eqref{Bsl_der_asym}. We will use mathematical induction on $n$ to prove the assertion of the theorem. Let us assume that \eqref{2n_der} and \eqref{2n+1_der} hold up to the $2n$-th derivative of the Bessel functions, for some positive integer $n$. Then in view of equation \eqref{2n_der} we can write that
\begin{equation}\label{2m_der}
\sqrt{\frac{1}{2}\pi x}J_\nu^{(2n)}(x)=\cos(x-\alpha)\tau_\nu^{(2n)}(x)+\sin(x-\alpha)\theta_\nu^{(2n)}(x),
\end{equation}
where
$$\tau_\nu^{(2n)}(x)=\sum_{m=0}^{\infty}\frac{\alpha_{2m,2n}}{x^{2m}},\ \theta_\nu^{(2n)}(x)=\sum_{m=0}^{\infty}\frac{\alpha_{2m+1,2n}}{x^{2m+1}},\
\alpha_{2m,2n}=\frac{\alpha_{2m,2n-1}}{2}-\frac{4(n-1)}{2^2}\alpha_{2m-1,2n-1},$$
and
$$\alpha_{2m+1,2n}=-\frac{\alpha_{2m-1,2n-1}}{2}-\frac{4n+1}{2^2}\alpha_{2m,2n-1}.$$
Dividing both sides of \eqref{2m_der} by $\sqrt{x}$ we obtain
\begin{align*}
\sqrt{\frac{\pi}{2}}J_\nu^{(2n)}(x)&=\cos(x-\alpha)\frac{\tau_\nu^{(2n)}(x)}{\sqrt{x}}+\sin(x-\alpha)\frac{\theta_\nu^{(2n)}(x)}{\sqrt{x}}\\
&=\cos(x-\alpha)\sum_{m=0}^{\infty}\frac{\alpha_{2m,2n}}{x^{\frac{4m+1}{2}}}+\sin(x-\alpha)\sum_{m=0}^{\infty}\frac{\alpha_{2m+1,2n}}{x^{\frac{4m+3}{2}}}.
\end{align*}
Differentiating both sides of above equation with respect to $x$, we arrive at
\begin{align*}
\sqrt{\frac{\pi}{2}}J_\nu^{(2n+1)}(x)=&-\cos(x-\alpha)\sum_{n=0}^{\infty}\frac{(4n+1)}{2}\frac{\alpha_{2m,2n}}{x^{\frac{4n+3}{2}}}
-\sin(x-\alpha)\sum_{n=0}^{\infty}\frac{2(2n+1)+1}{2}\frac{\alpha_{2m+1,2n}}{x^{\frac{4n+5}{2}}}\\
&-\sin(x-\alpha)\sum_{m=0}^{\infty}\frac{\alpha_{2m,2n}}{x^{\frac{4n+1}{2}}}+\cos(x-\alpha)\sum_{m=0}^{\infty}\frac{\alpha_{2m+1,2n}}{x^{\frac{4n+3}{2}}}.
\end{align*}
Now, multiplying both sides of the above equation by $\sqrt{x}$, we obtain
\begin{align*}
\sqrt{\frac{\pi x}{2}}J_\nu^{(2n+1)}(x)&=-\cos(x-\alpha)\sum_{m=0}^{\infty}\frac{(4m+1)}{2}\frac{\alpha_{2,2n}}{x^{2m+1}}-\sin(x-\alpha)\sum_{m=0}^{\infty}\frac{2(2m+1)+1}{2}\frac{\alpha_{2m+1}^{(2n)}}{x^{2n+2}}\\
&\hspace*{2em}-\sin(x-\alpha)\sum_{m=0}^{\infty}\frac{\alpha_{2m,2n}}{x^{2n}}+\cos(x-\alpha)\sum_{m=0}^{\infty}\frac{\alpha_{2m+1,2n}}{x^{2m+1}}\\
&=\sin(x-\alpha)\left[- \sum_{m=0}^{\infty}\frac{2(2n+1)+1}{2}\frac{\alpha_{2m+1,2n}}{x^{2m+2}}- \sum_{m=0}^{\infty}\frac{\alpha_{2m,2n}}{x^{2m}} \right]\\
&\hspace*{2em}+\cos(x-\alpha)\left[\sum_{m=0}^{\infty}\frac{\alpha_{2m+1,2n}}{x^{2m+1}} -\sum_{m=0}^{\infty}\frac{(4m+1)}{2}\frac{\alpha_{2m,2n}}{x^{2m+1}} \right]\\
&=\sin(x-\alpha)\left[- \sum_{m=0}^{\infty}\frac{4m-1}{2}\frac{\alpha_{2m-1,2n}}{x^{2m}}- \sum_{m=0}^{\infty}\frac{\alpha_{2m,2n}}{x^{2m}} \right]\\
&\hspace*{2em}+\cos(x-\alpha)\left[\sum_{m=0}^{\infty}\frac{\alpha_{2m+1,2n}}{x^{2m+1}} -\sum_{m=0}^{\infty}\frac{(4m+1)}{2}\frac{\alpha_{2m,2n}}{x^{2m+1}} \right]\\
&=\sin(x-\alpha)\left[ \sum_{m=0}^{\infty}\left(-\frac{(4m-1)\alpha_{2m-1,2n}}{2}- \alpha_{2m,2n}\right)\frac{1}{x^{2n}}\right]\\
&\hspace*{2em}+\cos(x-\alpha)\left[\sum_{m=0}^{\infty}\left(\alpha_{2m+1,2n} -\frac{(4m+1)\alpha_{2m,2n}}{2}\right)\frac{1}{x^{2m+1}} \right]\\
&=\sin(x-\alpha)\sum_{m=0}^{\infty}\frac{\alpha_{2m,2n+1}}{x^{2m}}+\cos(x-\alpha)\sum_{m=0}^{\infty}\frac{\alpha_{2m+1,2n+1}}{x^{2m+1 }},
\end{align*}
which satisfies the relations \eqref{rec_rln_odd} and \eqref{rec_rln_odd1}. We now derive the expansion of the even derivatives of Bessel functions by using the expansion of the previous odd derivatives. To accomplish this, suppose that \eqref{2n_der} and \eqref{2n+1_der} hold up to the $(2n+1)$-th derivative of the Bessel functions, for some positive integer $n$. Considering the equation \eqref{2n+1_der} in terms of $n$, we obtain that
\begin{equation}\label{odd_der}
\sqrt{\frac{1}{2}\pi x}J_\nu^{(2n+1)}(x)=\cos(x-\alpha)\theta_\nu^{(2n+1)}(x)+\sin(x-\alpha)\tau_\nu^{(2n+1)}(x),
\end{equation}
where
$$\tau_\nu^{(2n+1)}(x)=\sum_{m=0}^{\infty}\frac{\alpha_{2m,2n+1}}{x^{2m}},\ \theta_\nu^{(2n+1)}(x)=\sum_{m=0}^{\infty}\frac{\alpha_{2m+1,2n+1}}{x^{2m+1}},\
\alpha_{2m,2n+1}=\frac{-(4m-1)\alpha_{2m-1,2n}}{2}- \alpha_{2m,2n},$$
and
$$\alpha_{2m+1,2n}=\alpha_{2m+1,2n} -\frac{(4m+1)\alpha_{2m,2n}}{2}.$$
We will use the same procedure as in the case of the $2n$-th derivative of the Bessel functions to obtain the expression for $(2n+2)$-th derivative of Bessel functions. Dividing \eqref{odd_der} by $\sqrt{x}$ and differentiating with respect to $x$, we obtain
\begin{align*}
\sqrt{\frac{\pi}{2}}J_\nu^{(2n+2)}(x)&=\sin(x-\alpha)\left[\sum_{m=0}^{\infty}-\frac{\alpha_{2m+1,2n+1}}{x^{\frac{4m+3}{2}}}
-\sum_{m=0}^{\infty}\left(\frac{4m+1}{2}\frac{\alpha_{2m,2n+1}}{x^{\frac{4m+3}{2}}}\right)\right]\\
&\hspace*{2em}+\cos(x-\alpha)\left[\sum_{m=0}^{\infty}\frac{\alpha_{2m,2n+1}}{x^{\frac{4m+1}{2}}}
-\sum_{m=0}^{\infty}\left(\frac{4m+3}{2}\frac{\alpha_{2m+1,2n+1}}{x^{\frac{4m+5}{2}}}\right)\right].
\end{align*}
On multiplying both sides by $\sqrt{x}$ and arranging the terms we rewrite the above equation as
\begin{align*}
\sqrt{\frac{\pi x}{2}}J_\nu^{(2n+2)}(x)&=\sin(x-\alpha)\sum_{m=0}^{\infty}\left(-\alpha_{2m+1,2n+1}-\frac{4m+1}{2}\alpha_{2m,2n+1}\right)\frac{1}{x^{2m+1}}\\
&\hspace*{2em}+\cos(x-\alpha)\left[\sum_{m=0}^{\infty}\frac{\alpha_{2m,2n+1}}{x^{2m}}-\sum_{m=0}^{\infty}\left(\frac{4m+3}{2}\frac{\alpha_{2m+1,2n+1}}{x^{2(m+1)}}\right)\right]\\
&=\sin(x-\alpha)\sum_{m=0}^{\infty}\left(-\alpha_{2m+1,2n+1}-\frac{4m+1}{2}\alpha_{2m,2n+1}\right)\frac{1}{x^{2m+1}}\\
&\hspace*{2em}+\cos(x-\alpha)\left[\sum_{m=0}^{\infty}\frac{\alpha_{2m,2n+1}}{x^{2m}}-\sum_{m=1}^{\infty}\left(\frac{4m-1}{2}\frac{\alpha_{2m-1,2n+1}}{x^{2m}}\right)\right]\\
&=\sin(x-\alpha)\sum_{m=0}^{\infty}\frac{\alpha_{2m+1,2n+2}}{x^{2m+1}}+\cos(x-\alpha)\sum_{m=0}^{\infty}\frac{\alpha_{2m,2n+2}}{x^{2m}},
\end{align*}
where $\alpha_{2m+1,2n+2}\text{ and }\alpha_{2m,2n+2}$ are given by
$$\alpha_{2m+1,2n+2}=-\alpha_{2m+1,2n+1}-\frac{4m+1}{2}\alpha_{2m,2n+1}$$
and
$$\alpha_{2m,2n+2}= \alpha_{2m,2n+1}-\frac{4m-1}{2}\alpha_{2m-1,2n+1},$$
respectively. The above results establish the recurrence relation for the coefficients in the expansion of the $2n$-th derivative of the Bessel functions, as stated in Theorem \ref{Theorem1}. This completes the proof.
\end{proof}

In the next theorem, we establish a bound for the error in the asymptotic expansion of the derivatives of Bessel functions obtained in Theorem \ref{Theorem1}. This result generalizes \cite[Eq. (3.6)]{WL91} and is useful in deriving an error bound for the McMahon-type asymptotic expansion of $J_\nu^{(n)}(x)$. Moreover, this theorem may be of independent interest to researchers studying the asymptotic behavior of $J_\nu^{(n)}(x)$ for large $x$.

\begin{theorem}\label{Theorem2}
For $n\in\mathbb{N}_0$, let the even order derivative of the Bessel functions be denoted by
\begin{equation}\label{2n_der_del}
\sqrt{\frac{\pi x}{2}}J^{(2n)}_\nu(x)=(-1)^n\cos\left(x-\frac{1}{2}\nu \pi-\frac{1}{4}\pi\right)+\delta_{2n}(\nu,x),
\end{equation}
where $\delta_{2n}$ consists of terms on the right-hand side of the equation \eqref{2n_der} (cf. Remark \ref{remark1}). Then for $\nu\geq-2n+\frac{1}{2},n\in\mathbb{N}$ and large $x$, the expression $\delta_{2n}(\nu,x)$ is bounded as follows
\begin{equation}\label{2n_del_bnd}
|\delta_{2n}(\nu,x)|\leq \frac{4(\nu+2n)^2-1}{4x}\exp\left\{\frac{4(\nu+2n)^2-1}{4x}\right\}.
\end{equation}
Similarly, let the $(2n+1)$-th derivative of $J_\nu(x)$ be denoted by
\begin{equation}\label{2n+1_der_del}
\sqrt{\frac{\pi x}{2}}J^{(2n+1)}_\nu(x)=(-1)^{n+1}\sin\left(x-\frac{1}{2}\nu \pi-\frac{1}{4}\pi\right)+\delta_{2n+1}(\nu,x),
\end{equation}\label{2n+1_del_bnd}
where for $\nu\geq-2n+\frac{3}{2},n\in\mathbb{N}$ and large $x$, $\delta_{2n+1}$ consists of the remaining terms on the right-hand side of the equation \eqref{2n+1_der} (cf. Remark \ref{remark1}). Then the expression $\delta_{2n+1}(\nu,x)$ is bounded as follows
\begin{equation}
|\delta_{2n+1}(\nu,x)|\leq \frac{4(\nu+2n+1)^2-1}{4x}\exp\left\{\frac{4(\nu+2n+1)^2-1}{4x}\right\}.
\end{equation}
\end{theorem}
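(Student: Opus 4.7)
The cleanest approach is to reduce Theorem \ref{Theorem2} to the classical Wong--Lang bound \cite[Eq.~(3.6)]{WL91} for $J_\nu$ itself by means of the standard identity
$$J_\nu^{(n)}(x)=\frac{1}{2^n}\sum_{k=0}^{n}(-1)^k\binom{n}{k}J_{\nu-n+2k}(x),$$
obtained by iterating $2J_\nu'=J_{\nu-1}-J_{\nu+1}$. Each term then inherits the Wong--Lang error bound $|\delta_0(\mu,x)|\le\tfrac{4\mu^2-1}{4x}\exp\!\bigl(\tfrac{4\mu^2-1}{4x}\bigr)$ with $\mu=\mu_k:=\nu-2n+2k$, valid whenever $|\mu_k|\ge\tfrac12$. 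The threshold $\nu\ge-2n+\tfrac12$ is precisely the condition that secures $|\mu_k|\ge\tfrac12$ for every $k\in\{0,\dots,2n\}$, placing every summand inside the Wong--Lang regime.

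The next step is to show that the leading $\cos$ terms from each $J_{\mu_k}$ collapse to the single term $(-1)^n\cos(x-\tfrac{\nu\pi}{2}-\tfrac{\pi}{4})$ appearing in \eqref{2n_der_del}. This is a phase-algebra computation: $\cos(x-\tfrac{\mu_k\pi}{2}-\tfrac{\pi}{4})=(-1)^{n+k}\cos(x-\tfrac{\nu\pi}{2}-\tfrac{\pi}{4})$ via $\cos(\theta-k\pi)=(-1)^k\cos\theta$, the combined sign $(-1)^{n+k}(-1)^k=(-1)^n$ factors out uniformly, and $\sum_k\binom{2n}{k}=4^n$ cancels the $4^{-n}$ prefactor. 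The residual then takes the compact form
$$\delta_{2n}(\nu,x)=\frac{1}{4^n}\sum_{k=0}^{2n}(-1)^k\binom{2n}{k}\delta_0(\mu_k,x).$$
The triangle inequality with the termwise Wong--Lang bound, together with monotonicity of the majorant $t\mapsto\tfrac{4t^2-1}{4x}\exp(\tfrac{4t^2-1}{4x})$ in $|t|$ and the uniform dominance $\max_k|\mu_k|=\nu+2n$, then yields \eqref{2n_del_bnd}. The $(2n+1)$-th case runs in parallel with $2n\mapsto 2n+1$: the substitution $\cos(\theta-\tfrac{\pi}{2})=\sin\theta$ converts the leading term to $(-1)^{n+1}\sin(\cdot)$, and the analogous estimate holds with $\nu+2n+1$ in place of $\nu+2n$, the threshold shifting to $\nu\ge-2n+\tfrac32$ for exactly the same $|\mu_k|\ge\tfrac12$ reason.

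The main obstacle I anticipate is the uniform estimate $\max_k\mu_k^2=(\nu+2n)^2$. For $\nu\ge 0$ this is immediate from $\mu_k\in[\nu-2n,\nu+2n]$, but the theorem's weaker hypothesis $\nu\ge-2n+\tfrac12$ allows $\nu<0$, where the leftmost index satisfies $|\mu_0|=2n-\nu>\nu+2n$, and a naive application of monotonicity overshoots the target constant. I would try to rescue this by exploiting the signed pairing $k\leftrightarrow 2n-k$ in the remainder sum---which pairs each large-$|\mu|$ term with its alternate-sign partner---so that the excess growth cancels; extracting this cancellation cleanly likely requires replacing the termwise inequality by the Hankel-type integral representation of $\delta_0$, where conjugate contributions combine appropriately. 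This is the delicate technical step I would expect to dominate the proof.
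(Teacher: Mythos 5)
Your proposal follows essentially the same route as the paper's proof: the contiguous-order identity $J_\nu^{(2n)}(x)=2^{-2n}\sum_{m=0}^{2n}(-1)^m\binom{2n}{m}J_{\nu-2n+2m}(x)$ from \cite[Eq. 10.6.7]{OLBC10}, the termwise Olver/Wong--Lang error bound \eqref{del_besel} applied to each $J_{\nu-2n+2m}$, the phase computation collapsing the leading cosines to $(-1)^n\cos\left(x-\tfrac12\nu\pi-\tfrac14\pi\right)$ via $\cos(\theta+(n-m)\pi)=(-1)^{n-m}\cos\theta$ and $\sum_m\binom{2n}{m}=2^{2n}$, and finally the triangle inequality with the majorization $\mu_m^2\le(\nu+2n)^2$. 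The only genuine divergence is your closing paragraph, and there you have put your finger on a real issue --- one the paper does not resolve either. Two things go wrong below $\nu\ge 2n+\tfrac12$: first, your claim that $\nu\ge-2n+\tfrac12$ secures $|\mu_m|\ge\tfrac12$ for every $m$ is false (take $n=1$, $\nu=0$: then $\mu_1=0$), and in any case the cited bound \eqref{del_besel} is stated for order $\ge\tfrac12$, not $|{\cdot}|\ge\tfrac12$, so terms of small or negative order are simply outside its hypotheses; second, as you observe, for $\nu<0$ one has $|\mu_0|=2n-\nu>\nu+2n$, so the uniform dominance by $(\nu+2n)^2$ fails. The paper's own proof handles neither point: it applies the termwise bound, remarks that it "holds for $\nu-2n+2m\ge\tfrac12$" (a condition which, imposed for all $m$, forces $\nu\ge 2n+\tfrac12$), and then asserts the conclusion under the weaker hypothesis $\nu\ge-2n+\tfrac12$. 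So the cancellation machinery you contemplate (signed pairing $m\leftrightarrow 2n-m$, Hankel integral representations) is not something you need in order to reproduce the paper's argument --- the paper never attempts it --- but it \emph{would} be needed to justify the theorem on the full stated parameter range. As written, both your proof and the paper's are complete only for $\nu\ge 2n+\tfrac12$ in the even case and $\nu\ge 2n+\tfrac32$ in the odd case.
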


\begin{proof}[\bf Proof of Theorem \ref{Theorem2}]\label{Th2_proof}
From \cite[eq. 10.6.7]{OLBC10} we write the $2n$-th derivative of the Bessel functions as
\begin{align*}
J^{(2n)}_\nu(x)=\frac{1}{2^{2n}}\sum_{m=0}^{2n}(-1)^m\binom{2n}{m}J_{\nu-2n+2m}(x).
\end{align*}
Moreover, by using \eqref{Bsl_asym} and the above equation, we obtain that
\begin{align*}
J^{(2n)}_\nu(x)&=\frac{1}{2^{2n}}\sum_{m=0}^{2n}(-1)^m\binom{2n}{m}\left[\cos\left(x-\frac{1}{2}(\nu-2n+2m) \pi-\frac{1}{4}\pi\right)+\delta_{}(\nu-2n+2m,x)\right]\\
&=\frac{1}{2^{2n}}\sum_{m=0}^{2n}(-1)^m\binom{2n}{m}\cos\left(x-\frac{1}{2}(\nu-2n+2m) \pi-\frac{1}{4}\pi\right)\\
&\quad+\frac{1}{2^{2n}}\sum_{m=0}^{2n}(-1)^m\binom{2n}{m}\delta_{}(\nu-2n+2m,x).
\end{align*}
Let us consider the first term on the right hand side of the above equation, which we can rewrite as
\begin{align*}
\frac{1}{2^{2n}}&\sum_{m=0}^{2n}(-1)^m\binom{2n}{m}\cos\left(x-\frac{1}{2}(\nu-2n+2m) \pi-\frac{1}{4}\pi\right)\\
&=\frac{1}{2^{2n}}\sum_{m=0}^{2n}(-1)^m\binom{2n}{m}\cos\left(x-\frac{1}{2}\nu\pi+(n-m) \pi-\frac{1}{4}\pi\right)\\
&=\frac{1}{2^{2n}}\sum_{m=0}^{2n}(-1)^m\binom{2n}{m}(-1)^{(n-m)}\cos\left(x-\frac{1}{2}\nu\pi-\frac{1}{4}\pi\right)\\
&=\frac{(-1)^n}{2^{2n}}\cos\left(x-\frac{1}{2}\nu\pi-\frac{1}{4}\pi\right)\sum_{m=0}^{2n}\binom{2n}{m}\\&=(-1)^n\cos\left(x-\frac{1}{2}\nu\pi-\frac{1}{4}\pi\right).
\end{align*}
Now, we denote the second term on the right hand side of the above equation as
\begin{equation}\label{delta_even_der}
\delta_{2n}=\frac{1}{2^{2n}}\sum_{m=0}^{2n}(-1)^m\binom{2n}{m}\delta_{}(\nu-2n+2m,x).
\end{equation}
To show that $\delta_{2n}$ is bounded as \eqref{2n_del_bnd}, we use the following result on Bessel functions
\begin{align*}
J_\nu(x)=\sqrt{\frac{2}{\pi x}}\left[\cos\left(x-\frac{1}{2}\nu \pi-\frac{1}{4}\pi\right)+\delta_1(\nu,x)\right],
\end{align*}
where
\begin{equation}\label{del_besel} |\delta_1(\nu,x)|\leq\frac{4\nu^2-1}{4x}\exp\left\{\frac{4\nu^2-1}{4x}\right\}\quad \mbox{for}\ \nu\geq\frac{1}{2},
\end{equation}
which can be proved by using the corresponding results on Bessel functions of the third kind $H_\nu^{(1)}(x)$ and $H_\nu^{(2)}(x)$ \cite[p. 266]{Ol74} and the relation $J_\nu(x)=\frac{1}{2}\left[H_\nu^{(1)}(x)+H_\nu^{(2)}(x)\right]$ (cf. \cite[p. 512]{WL90}). Notice that $\delta_{2n}(\nu-2n+2m,x)$ on the right hand side of the equation \eqref{delta_even_der} is the error corresponding to the asymptotic approximation of $J_{\nu-2n+2m}(x)$. By using the triangle inequality, the equation \eqref{del_besel} and the summation of the binomial expansion, we obtain that
\begin{align*}
|\delta_{2n}|&=\frac{1}{2^{2n}}\left|\sum_{m=0}^{2n}(-1)^m\binom{2n}{m}\delta_{2n}(\nu-2n+2m,x)\right|\\
&\leq\frac{1}{2^{2n}}\sum_{m=0}^{2n}\binom{2n}{m}\left|\delta_{2n}(\nu-2n+2m,x)\right|\\
&\leq\frac{1}{2^{2n}}\sum_{m=0}^{2n}\binom{2n}{m}\frac{4(\nu-2n+2m)^2-1}{4x}\exp\left\{\frac{4(\nu-2n+2m)^2-1}{4x}\right\}\\
&\leq\frac{1}{2^{2n}}\frac{4(\nu+2n)^2-1}{4x}\exp\left\{\frac{4(\nu+2n)^2-1}{4x}\right\}\sum_{m=0}^{2n}\binom{2n}{m}\\
&=\frac{4(\nu+2n)^2-1}{4x}\exp\left\{\frac{4(\nu+2n)^2-1}{4x}\right\}.
\end{align*}
The above inequality holds for $\nu-2n+2m\geq \frac{1}{2}$  for different $\delta_{2n}(\nu-2n+2m,x)$, and the final inequality holds for
$\nu \geq -2n+\frac{1}{2}.$ From \cite[Eq. 10.6.7]{OLBC10} we write the $(2n+1)$-th derivative of the Bessel functions as
\begin{align*}
J^{(2n+1)}_\nu(x)=\frac{1}{2^{2n+1}}\sum_{m=0}^{2n+1}(-1)^m\binom{2n+1}{m}J_{\nu-2n-1+2m}(x),
\end{align*}
that is
\begin{align*}
J^{(2n+1)}_\nu(x)&=\frac{1}{2^{2n+1}}\sum_{m=0}^{2n+1}(-1)^m\binom{2n+1}{m}\left[\cos\left(x-\frac{1}{2}(\nu-2n-1+2m) \pi-\frac{1}{4}\pi\right) \right. \\
& \left. \hspace*{2em}+\delta_{}(\nu-2n-1+2m,x)\right]\\
&=\frac{1}{2^{2n+1}}\sum_{m=0}^{2n+1}(-1)^m\binom{2n+1}{m}\cos\left(x-\frac{1}{2}(\nu-2n-1+2m) \pi-\frac{1}{4}\pi\right)\\
&\quad+\frac{1}{2^{2n+1}}\sum_{m=0}^{2n+1}(-1)^m\binom{2n+1}{m}\delta_{}(\nu-2n-1+2m,x).
\end{align*}
Now, let us consider the first term on the right hand side of the above equation, which can be expressed as
\begin{align*}
&\frac{1}{2^{2n+1}}\sum_{m=0}^{2n+1}(-1)^m\binom{2n+1}{m}\cos\left(x-\frac{1}{2}(\nu-2n-1+2m) \pi-\frac{1}{4}\pi\right)\\
&=\frac{1}{2^{2n+1}}\sum_{m=0}^{2n+1}(-1)^m\binom{2n+1}{m}\cos\left(x-\frac{1}{2}\nu\pi+(n-m) \pi+\frac{\pi}{2}-\frac{1}{4}\pi\right)\\
&=\frac{1}{2^{2n+1}}\sum_{m=0}^{2n+1}(-1)^m\binom{2n+1}{m}(-1)^{(n-m+1)}\sin\left(x-\frac{1}{2}\nu\pi-\frac{1}{4}\pi\right)\\
&=\frac{(-1)^{n+1}}{2^{2n+1}}\sin\left(x-\frac{1}{2}\nu\pi-\frac{1}{4}\pi\right)\sum_{m=0}^{2n+1}\binom{2n+1}{m}\\
&=(-1)^{n+1}\sin\left(x-\frac{1}{2}\nu\pi-\frac{1}{4}\pi\right).
\end{align*}
Now, we denote the second term on the right hand side of the above equation as
\begin{equation}\label{delta_odd_der}
\delta_{2n+1}=\frac{1}{2^{2n+1}}\sum_{m=0}^{2n+1}(-1)^m\binom{2n+1}{m}\delta_{}(\nu-2n-1+2m,x).
\end{equation}
Notice that $\delta_{2n}(\nu-2n-1+2m,x)$ on the right hand side of the equation \eqref{delta_odd_der} is the error corresponding to the asymptotic approximation of $J_{\nu-2n-1+2m}(x)$. By using the triangle inequality, the equation \eqref{del_besel} and the summation of binomial expansion, we obtain that
\begin{align*}
|\delta_{2n+1}|&=\frac{1}{2^{2n+1}}\left|\sum_{m=0}^{2n+1}(-1)^m\binom{2n+1}{m}\delta_{2n+1}(\nu-2n-1+2m,x)\right|\\
&\leq\frac{1}{2^{2n+1}}\sum_{m=0}^{2n+1}\binom{2n+1}{m}\left|\delta_{2n+1}(\nu-2n+2m-1,x)\right|\\
&\leq\frac{1}{2^{2n+1}}\sum_{m=0}^{2n+1}\binom{2n+1}{m}\frac{4(\nu-2n+2m-1)^2-1}{4x}\exp\left\{\frac{4(\nu-2n+2m-1)^2-1}{4x}\right\}\\
&\leq\frac{1}{2^{2n+1}}\frac{4(\nu+2n+1)^2-1}{4x}\exp\left\{\frac{4(\nu+2n+1)^2-1}{4x}\right\}\sum_{m=0}^{2n+1}\binom{2n+1}{m}\\
&=\frac{4(\nu+2n+1)^2-1}{4x}\exp\left\{\frac{4(\nu+2n+1)^2-1}{4x}\right\},
\end{align*}
for $\nu\geq-2n+\frac{3}{2}$.
\end{proof}

Before finding the McMahon asymptotic expansion of Bessel functions derivatives, let us review some related results. Baricz et. al. \cite{BKP18} proved that, for $\nu>n-1$, all zeros of $J_\nu^{(n)}(x)$ are real and simple. They also conjectured that, for every $n\in\mathbb{N}$ , the positive zeros of $J_\nu^{(k)}(x)$ are increasing functions of the parameter $\nu$, for $\nu\in\left(n-1,\infty\right)$. This conjecture was recently settled  by Dimitrov and Lun \cite{DL25}.

\subsection{McMahon-type expansion for $j_{\nu, k}^{(n)}$}
In \cite[p. 247]{Ol74}, Olver used the inversion technique to derive the McMahon expansion for the zeros of Bessel functions of the first kind $J_\nu(z)$. Wong and Lang \cite{WL90} used the same technique to study the zeros of $J_\nu^{\prime\prime}(x)$. Here we will use the argument of Olver to study the odd as well as even order derivatives of Bessel functions, separately. Lastly we will outline that the method used by McMahon is also applicable to derive the asymptotic expansion of zeros of Bessel functions.

First, we will use the argument of Olver \cite{Ol74} for odd order derivatives of Bessel functions. By using \eqref{2n+1_der}, we write
\begin{align*}
\sqrt{\frac{1}{2}\pi x}J_\nu^{2m+1}(x)=\cos\left(x-\frac{\nu \pi}{2}-\frac{\pi}{4}\right)\theta_\nu^{(2m+1)}(x)+
\sin\left(x-\frac{\nu \pi}{2}-\frac{\pi}{4}\right)\tau_\nu^{(2m+1)}(x),
\end{align*}
where $$\theta_\nu^{(2m+1)}(x)=\sum_{n=0}^{\infty}\frac{\alpha_{2n+1,2m+1}}{x^{2n+1}}=\frac{\alpha_{1,2m+1}}{x}+\frac{\alpha_{3,2m+1}}{x^3}+\frac{\alpha_{5,2m+1}}{x^5}+\ldots$$ and $$\tau_\nu^{(2m+1)}(x)=\sum_{n=0}^{\infty}\frac{\alpha_{2n,2m+1}}{x^{2n}}=\alpha_{0,2m+1}+\frac{\alpha_{2,2m+1}}{x^2}+\frac{\alpha_{4,2m+1}}{x^4}+{\ldots}.$$
If $x$ is a zero of $J_\nu^{2m+1}(x)$, then considering the series expansion of  $\theta_\nu^{(2m+1)}(x)\text{ and }\tau_\nu^{(2m+1)}(x)$ and the fact that $|\cos x|\leq1$, the first approximation is given by
\begin{equation}\label{sin_eqn}
\sin\left(x-\frac{\nu \pi}{2}-\frac{\pi}{4}\right)+\mathcal{O}\left(\frac{1}{x}\right)=0.
\end{equation}
When $x$ is large, the left-hand side is dominated by the first term. The above equation implies
$$x\sim (k-1)\pi+\frac{\nu \pi}{2}+\frac{\pi}{4},$$
for some large integer $k$. This is the first approximation to the root of the equation \eqref{sin_eqn}. In view of the fact that the large $x$ is equivalent to the integer $k$, the equation \eqref{sin_eqn} we write
\begin{align*}
x&=(k-1)\pi+\frac{\nu \pi}{2}+\frac{\pi}{4}+\mathcal{O}\left(\frac{1}{x}\right)\\
&=k\pi+\frac{\nu \pi}{2}-\frac{3\pi}{4}+\mathcal{O}\left(\frac{1}{k}\right)\\
&=\beta+\mathcal{O}\left(\frac{1}{k}\right),
\end{align*}
where $\beta=k\pi+\frac{1}{2}\nu \pi-\frac{3\pi}{4}$. Now we write $$\sin\left(x-\frac{1}{2}\nu \pi-\frac{\pi}{4}\right)=\sin\left(x-\beta+\beta-\frac{1}{2}\nu \pi-\frac{\pi}{4}\right)=\sin\left(x-\beta+(k-1)\pi\right),$$
which can be written as $\sin\left(x-\frac{\nu \pi}{2}-\frac{\pi}{4}\right)=(-1)^{(k-1)}\sin\left(x-\beta\right)$. Moreover, note that $$\cos\left(x-\frac{1}{2}\nu \pi-\frac{\pi}{4}\right)=\cos\left(x-\beta+(k-1)\pi\right)=(-1)^{k-1}\cos\left(x-\beta\right).$$
Consequently,
\begin{align*}
\cos\left(x-\frac{\nu \pi}{2}-\frac{\pi}{4}\right)\theta_\nu^{(2m+1)}(x)+\sin\left(x-\frac{\nu \pi}{2}-\frac{\pi}{4}\right)\tau_\nu^{(2m+1)}(x)=0
\end{align*}
implies that
\begin{align*}
\tan(x-\beta)=-\frac{\theta_\nu^{(2m+1)}(x)}{\tau_\nu^{(2m+1)}(x)}=-\left.\sum_{n=0}^{\infty}\frac{\alpha_{2n+1,2m+1}}{x^{2n+1}}\right/\sum_{n=0}^{\infty}\frac{\alpha_{2n,2m+1}}{x^{2n}}
\end{align*}
or equivalently
\begin{align*}
x-\beta&=-\arctan\left[\left.\sum_{n=0}^{\infty}\frac{\alpha_{2n+1,2m+1}}{x^{2n+1}}\right/\sum_{n=0}^{\infty}\frac{\alpha_{2n,2m+1}}{x^{2n}}\right].
\end{align*}
By using the expansion of $\arctan(x)$ and the binomial expansion we obtain that
\begin{align*}
x=\beta-\frac{\alpha_{1,2m+1}}{\alpha_{0,2m+1}}\frac{1}{x}-\left(\frac{\alpha_{3,2m+1}}{\alpha_{0,2m+1}}-
\frac{\alpha_{1,2m+1}\alpha_{2,2m+1}}{(\alpha_{0,2m+1})^2}-\frac{1}{3}\frac{(\alpha_{1,2m+1})^3}{(\alpha_{0,2m+1)})^3}\right)\frac{1}{x^3}-{\ldots}.
\end{align*}
Substituting $x=\beta+\mathcal{O}\left(\frac{1}{k}\right)$ in the last equation, yields
\begin{align*}
x=\beta-\frac{\alpha_{1,2m+1}}{\alpha_{0,2m+1)}}\frac{1}{\beta}+\mathcal{O}\left(\frac{1}{\beta^3}\right).
\end{align*}
Further substitution gives
\begin{equation}\label{odd_asym}
x=\beta-\frac{\alpha_{1,2m+1}}{\alpha_{0,2m+1}}\frac{1}{\beta}-\left(\left(\frac{\alpha_{1,2m+1}}{\alpha_{0,2m+1}}\right)^2\frac{\alpha_{3,2m+1}}{\alpha_{0,2m+1}}-
\frac{\alpha_{1,2m+1}\alpha_{2,2m+1}}{(\alpha_{0,2m+1})^2}-\frac{1}{3}\frac{(\alpha_{1,2m+1})^3}{(\alpha_{0,2m+1})^3}\right)\frac{1}{\beta^3}-{\ldots}.
\end{equation}
Now, let us consider the even derivative of Bessel functions. By using \eqref{2n+1_der}, we write
\begin{equation}\label{even_exp}
\sqrt{\frac{1}{2}\pi x}J_\nu^{(2m)}(x)=\cos\left(x-\frac{\nu \pi}{2}-\frac{\pi}{4}\right)\tau_\nu^{(2m)}(x)+\sin\left(x-\frac{\nu \pi}{2}-\frac{\pi}{4}\right)\theta_\nu^{(2m)}(x),
\end{equation}
where $$\theta_\nu^{(2m)}(x)=\sum_{n=0}^{\infty}\frac{\alpha_{2n+1,2m}}{x^{2n+1}}=\frac{\alpha_{1,2m}}{x}+\frac{\alpha_{3,2m}}{x^3}+\frac{\alpha_{5,2m}}{x^5}+\ldots$$ and $$\tau_\nu^{(2m)}(x)=\sum_{n=0}^{\infty}\frac{\alpha_{2n,2m}}{x^{2n}}=\alpha_{0,2m}+\frac{\alpha_{2,2m}}{x^2}+\frac{\alpha_{4,2m}}{x^4}+{\ldots}.$$
If $x$ is a zero of $J_\nu^{(2m)}(x)$, then considering the series expansions of $\theta_\nu^{(2m)}(x)$ and $\tau_\nu^{(2m)}(x)$, and the fact that $|\sin x|\leq1$, the first approximation is given by
\begin{equation}\label{cos_eqn}
\cos\left(x-\frac{\nu \pi}{2}-\frac{\pi}{4}\right)+\mathcal{O}\left(\frac{1}{x}\right)=0.
\end{equation}
When $x$ is large, the left-hand side is dominated by the first term and therefore, the above equation implies that
$$x\sim k\pi-\frac{\pi}{2}-\frac{\nu \pi}{2}-\frac{\pi}{4},$$
for some large integer $k$. This is the first approximation to the root of the equation \eqref{sin_eqn}. In view of the fact that large $x$ is equivalent to the integer $k$, and in view of the equation \eqref{cos_eqn} we write that
\begin{align*}
x=k\pi+\frac{\nu \pi}{2}-\frac{\pi}{4}+\mathcal{O}\left(\frac{1}{x}\right)
=\alpha+\mathcal{O}\left(\frac{1}{k}\right),
\end{align*}
where $\alpha=k\pi+\frac{1}{2}\nu \pi-\frac{\pi}{4}$. Now, we write $$\sin\left(x-\frac{1}{2}\nu \pi-\frac{\pi}{4}\right)=\sin\left(x-\alpha+\alpha-\frac{1}{2}\nu \pi-\frac{\pi}{4}\right)=\sin\left(x-\alpha+k\pi-\frac{\pi}{2}\right)=(-1)^{k+1}\cos(x-\alpha).$$
Moreover, we have that
$$\cos(x-\frac{1}{2}\nu \pi-\frac{\pi}{4})=\cos(x-\alpha+k\pi-\frac{\pi}{2})=(-1)^{k}\sin(x-\alpha).$$
Consequently, from the above discussion, for $x$ to be the root of the equation \eqref{even_exp}, we write
\begin{align*}
\cos\left(x-\frac{\nu \pi}{2}-\frac{\pi}{4}\right)\tau_\nu^{(2m)}(x)+\sin\left(x-\frac{\nu \pi}{2}-\frac{\pi}{4}\right)\theta_\nu^{(2m)}(x)=0,
\end{align*}
which implies
\begin{align*}
\tan(x-\alpha)=\frac{\theta_\nu^{(2m)}(x)}{\tau_\nu^{(2m)}(x)}=\left.\sum_{n=0}^{\infty}\frac{\alpha_{2n+1,2m}}{x^{2n+1}}\right/\sum_{n=0}^{\infty}\frac{\alpha_{2n,2m}}{x^{2n}},
\end{align*}
or equivalently
\begin{align*}
x-\alpha&=\arctan\left[\left.\sum_{n=0}^{\infty}\frac{\alpha_{2n+1,2m}}{x^{2n+1}}\right/\sum_{n=0}^{\infty}\frac{\alpha_{2n,2m}}{x^{2n}}\right]
\end{align*}
By using the expansion of $\arctan(x)$ and the binomial expansion, we obtain that
\begin{align*}
x=\alpha+\frac{\alpha_{1,2m}}{\alpha_{0,2m}}\frac{1}{x}+\left(\frac{\alpha_{3,2m}}{\alpha_{0,2m}}-
\frac{\alpha_{1,2m}\alpha_{2,2m}}{(\alpha_{0,2m})^2}-\frac{1}{3}\frac{(\alpha_{1,2m})^3}{(\alpha_{0,2m})^3}\right)\frac{1}{x^3}-{\ldots}.
\end{align*}
Substituting $x=\alpha+\mathcal{O}\left(\frac{1}{k}\right)$ in the last equation, we obtain
\begin{align*}
x=\alpha+\frac{\alpha_{1,2m}}{\alpha_{0,2m}}\frac{1}{\alpha}+\mathcal{O}\left(\frac{1}{\alpha^3}\right).
\end{align*}
Further substitution gives
\begin{equation}\label{even_asym}
x=\alpha+\frac{\alpha_{1,2m}}{\alpha_{0,2m}}\frac{1}{\alpha}+\left(\frac{\alpha_{3,2m}}{\alpha_{0,2m}}-\frac{\alpha_{1,2m}\alpha_2^{(2m)}}{(\alpha_{0,2m})^2}-\frac{1}{3}\frac{(\alpha_{1,2m})^3}{(\alpha_{0,2m})^3}-\left(\frac{\alpha_{1,2m}}{\alpha_{0,2m}}\right)^2\right)\frac{1}{\alpha^3}-\cdots.
\end{equation}
Note that Baricz et. al. \cite[Theorem 1]{BKP18} proved that for $\nu\geq k$ all the zeros of $J_\nu^{(k)}(x)$ are positive and that the zeros of $n$-th and $(n+1)$-th derivatives of $J_\nu(x)$ are interlacing. For the zeros of $k$-th and $(k+1)$-th derivatives, we write
\begin{align*}
j_{\nu,n}^{(2m+1)}<j_{\nu,n}^{(2m)}<j_{\nu,n+1}^{(2m+1)}<j_{\nu,n+1}^{(2m)}<\cdots, \quad \nu\geq 2m.
\end{align*}
By using this interlacing property we conclude that \eqref{even_asym} and \eqref{odd_asym} represent the McMahon asymptotic expansion for $j_{\nu,k+1}^{(2m)}$ and $j_{\nu,k}^{(2m+1)}$, for $\nu\geq 2m$, respectively.

\begin{remark}
{\em We observe that the above asymptotic expansion is established for $\nu \geq 2m$. If the conjecture proposed by Baricz et al. \cite[Conjecture 1a]{BKP18} is proven true, this range could be significantly extended by using the above described procedure, thereby motivating further investigation into the validity of this conjecture and its implications for the asymptotic analysis of $J_\nu^{(n)}(x)$.}
\end{remark}

The error bound of an asymptotic expansion is important as it quantifies the accuracy of the approximation and ensures its reliability. It determines the range of validity and guides how many terms are required for a desired precision. Next, we are going to derive error bounds for the McMahon asymptotic expansion obtained above.

\subsection{An error bound}
In this section we find the bound for the approximation for the zeros of Bessel functions derivative by modifying the argument of Hethcote \cite{He70}, where he demonstrated that if $k\geq 0.314-\frac{1}{2}\nu+1.38|4\nu^2-1|,$ then
\begin{align*}
\left|j_{\nu,k}-\left(k+\frac{1}{2}\nu-\frac{1}{4}\right)\pi\right|\leq\frac{0.90|4\nu^2-1|}{\pi\left(k+\frac{1}{2}\pi-0.314\right)}.
\end{align*}
Here we show that if $\nu>2n-1$ and $k\geq -\frac{\nu}{2}+0.314+\frac{21}{\pi}\left((\nu+2n)^2-\frac{1}{4}\right),$ then
\begin{equation}\label{even_der_bnd}
\left|j_{\nu,k+1}^{(2n)}-\left(k+\frac{1}{2}\nu-\frac{1}{4}\right)\pi\right|\leq\frac{0.26(4(\nu+2n)^2-1)}{\pi\left(k+\frac{1}{2}\nu-0.314\right)}.
\end{equation}
Also for odd derivatives of Bessel functions we obtain a similar bound for the case when
if $\nu>2n$ and $k\geq -\frac{\nu}{2}+0.314+\frac{21}{\pi}\left((\nu+2n+1)^2-\frac{1}{4}\right)$
\begin{equation}\label{odd_der_bnd}
\left|j_{\nu,k}^{(2n+1)}-\left(k+\frac{1}{2}\nu-\frac{1}{4}\right)\pi\right|\leq\frac{0.26(4(\nu+2n+1)^2-1)}{\pi\left(k+\frac{1}{2}\nu-0.314\right)}.
\end{equation}
Let us first revisit the lemma presented in Hethcote \cite{He70}, which was derived using a method introduced by Gatteschi \cite{Ga56}.

\begin{lemma}\label{lemma1}
In the interval $[n\pi-\psi-\rho,n\pi-\psi+\rho]$, where $\rho<\frac{1}{2}\pi$, suppose $f(x)=\sin(x+\psi)+\epsilon(x)$, $f(x)$ is continuous and $E=\max|\epsilon(x)|<\sin\rho$. Then there exists a zero $c$ of $f(x)$ in the interval such that $|c-(n\pi-\psi)|\leq E\rho \csc \rho$.
\end{lemma}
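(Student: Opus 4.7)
The plan is to prove Lemma \ref{lemma1} in two steps: first establish that a zero exists in the prescribed interval by a sign-change argument, and then bound its distance from $n\pi-\psi$ by using the concavity of $\sin$ on $[0,\pi/2]$.

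For existence, I would evaluate the unperturbed function at the endpoints. At $x=n\pi-\psi-\rho$ we have $\sin(x+\psi)=\sin(n\pi-\rho)=(-1)^{n+1}\sin\rho$, while at $x=n\pi-\psi+\rho$ we have $\sin(x+\psi)=(-1)^{n}\sin\rho$. Since $|\epsilon(x)|\leq E<\sin\rho$ on the whole interval, the perturbation cannot reverse the sign of $\sin(x+\psi)$ at the endpoints. Hence $f$ takes values of opposite sign at the two endpoints, and continuity (via the intermediate value theorem) furnishes a zero $c$ in $[n\pi-\psi-\rho,n\pi-\psi+\rho]$.

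For the quantitative bound, I would set $t=c-(n\pi-\psi)$, so that $|t|\leq\rho$ and $\sin(c+\psi)=\sin(n\pi+t)=(-1)^{n}\sin t$. Since $f(c)=0$ gives $\sin(c+\psi)=-\epsilon(c)$, we obtain $|\sin t|\leq E$. The key analytic fact is that $u\mapsto\sin u/u$ is strictly decreasing on $(0,\pi/2]$, so for $|t|\leq\rho<\pi/2$ we have $|\sin t|/|t|\geq \sin\rho/\rho$, i.e.\ $|\sin t|\geq |t|\sin\rho/\rho$. Combining this with $|\sin t|\leq E$ yields
\begin{equation*}
|c-(n\pi-\psi)|=|t|\leq \frac{E\rho}{\sin\rho}=E\rho\csc\rho,
\end{equation*}
which is the required bound.

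There is no real obstacle here; the argument is essentially a perturbation-of-zero argument combined with the elementary lower bound $\sin t\geq (\sin\rho/\rho)\,t$ on $[0,\rho]$. The only points to take care of are (i) correctly tracking the alternating sign $(-1)^n$ at the two endpoints so that the IVT applies regardless of the parity of $n$, and (ii) invoking the monotonicity of $\sin u/u$ on $(0,\pi/2)$ rather than a cruder linear bound — the latter is what gives the sharp constant $\rho\csc\rho$ in the conclusion.
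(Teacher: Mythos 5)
Your proof is correct. Note that the paper itself does not prove this lemma but quotes it from Hethcote \cite{He70}; your argument (opposite signs at the endpoints since $|\sin(x+\psi)|=\sin\rho>E$ there, then the intermediate value theorem, then the monotonicity of $u\mapsto\sin u/u$ on $(0,\pi/2]$ to pass from $|\sin t|\leq E$ to $|t|\leq E\rho\csc\rho$) is exactly the standard proof of that result, and in fact establishes the slightly stronger statement that \emph{every} zero of $f$ in the interval satisfies the stated bound.
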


From Theorem \ref{Theorem2}, we rewrite equation \eqref{2n_der_del}, which is the asymptotic expansion of even derivatives of Bessel functions as
\begin{align*}
(-1)^n\sqrt{\frac{2}{\pi x}}J_\nu^{(2n)}(x)= \sin\left(x-\frac{1}{2}\pi\nu+\frac{\pi}{4}\right)+(-1)^n\delta_{2n}(\nu,x),
\end{align*}
where we used $\cos(x-\frac{\pi}{2})=\sin x$. In order to use  Lemma  \ref{lemma1}, let us suppose that $$f(x)=(-1)^n\sqrt{\frac{2}{\pi x}}J_\nu^{(2n)}(x),\quad \psi=-\frac{1}{2}\nu \pi+\frac{1}{4}\pi,\quad \rho=0.2$$
and
\begin{equation}\label{epsl}
|\epsilon(x)|=\left|(-1)^n\delta_{2n}(\nu,x)\right|\leq \frac{4(\nu+2n)^2-1}{4x}\exp\left\{\frac{4(\nu+2n)^2-1}{4x}\right\}.
\end{equation}
Let $x\in [k\pi-\psi-\rho,k\pi-\psi+\rho]$. Then for the assumed value of $\rho \text{ and }\psi$ we obtain
\begin{equation}\label{x_lb1}
x\geq \pi\left(k+\frac{1}{2}\nu-0.314\right).
\end{equation}
We now use the result $0\leq y\leq \frac{l}{l+1}$ which implies that
\begin{align*}
y \exp y\leq \frac{y}{1-y}\leq l.
\end{align*}
Based on the above result, we conclude that
\begin{equation}\label{epsl_bnd}
\frac{4(\nu+2n)^2-1}{4x}\exp\left\{\frac{4(\nu+2n)^2-1}{4x}\right\}\leq\frac{1}{20}
\end{equation}
if
\begin{equation}\label{exp_bnd}
\frac{4(\nu+2n)^2-1}{4x}\leq \frac{1}{21},\ \text{i.e.}\ \frac{21(4(\nu+2n)^2-1)}{4}\leq x.
\end{equation}
Since $k$ is large, both \eqref{x_lb1} and \eqref{exp_bnd} hold if
\begin{align*}
\pi\left(k+\frac{1}{2}\nu-0.314\right)\geq\frac{21(4(\nu+2n)^2-1)}{4}.
\end{align*}
Solving the above inequality we obtain
\begin{equation*}\label{k_vlaue}
k\geq -\frac{\nu}{2}+0.314+\frac{21}{\pi}\left((\nu+2n)^2-\frac{1}{4}\right).
\end{equation*}
From \eqref{epsl}, \eqref{epsl_bnd} and Lemma \ref{lemma1}, we obtain
\begin{equation}\label{rln_E_epln}
E=\max|\epsilon(\nu,x)|\leq0.05,
\end{equation}
where the maximum is taken over all $x$ satisfying \eqref{x_lb1} and $\nu>2n-1$. From the above discussion it is clear that
\begin{align*}
E\leq \sin 0.2=0.19867.
\end{align*}
Hence, by \eqref{even_asym} and Lemma \ref{lemma1} we obtain
\begin{equation}\label{bnd_1}
\left|j^{(2n)}_{\nu,k+1}-\left(k\pi+\frac{1}{2}\nu\pi-\frac{1}{4}\pi\right)\right|\leq E\rho\csc \rho, \quad \rho=0.2.
\end{equation}
From \eqref{epsl}, \eqref{x_lb1} and \eqref{exp_bnd} we arrive at
\begin{align*}
|\epsilon(x)|&\leq \frac{4(\nu+2n)^2-1}{4x}\exp\left\{\frac{4(\nu+2n)^2-1}{4x}\right\}\\
&\leq \frac{4(\nu+2n)^2-1}{4\pi\left(k+\frac{1}{2}\nu-0.314\right)}\exp(1/21)\\
&\leq \frac{0.26(4(\nu+2n)^2-1)}{\pi\left(k+\frac{1}{2}\nu-0.314\right)}.
\end{align*}
By using the above bound for $\epsilon(x)$, \eqref{rln_E_epln} and \eqref{bnd_1}, for $\rho=0.2$, we obtain the required bound \eqref{even_der_bnd}. Similarly, we can prove the bound for the error of zeros of odd order derivatives of Bessel functions \eqref{odd_der_bnd}.

The next section is dedicated for the asymptotic expansion of the zeros of $n$-th order derivatives of Bessel functions provided $\nu\to\infty$.

\section{\bf Asymptotic expansions for large $\nu$}
\subsection{Some older results} First let us recall some basic results regarding the expression of Bessel functions and its derivatives for large $\nu$.
The uniform expansions, with respect to $x\in(0,\infty)$, for Bessel functions and its derivatives from Olver \cite[p. 338 and 342]{Ol54} (cf. \cite[sect. 4]{WL90}) are given by
\begin{equation}\label{Bsl_LNu}
J_\nu(\nu x)\sim \phi(\zeta)\left\{\frac{\airya\left(\nu^{\frac{2}{3}}\zeta\right)}{\nu ^\frac{1}{3}}\sum_{s=0}^{\infty}\frac{A_{s,0}(\zeta)}{\nu^{2s}}+\frac{\airya'\left(\nu^{\frac{2}{3}}\zeta\right)}{\nu ^\frac{5}{3}}\sum_{s=0}^{\infty}\frac{B_{s,0}(\zeta)}{\nu^{2s}}\right\}
\end{equation}
and
\begin{equation}\label{Bsl_der_LNu}
J_\nu^\prime(\nu x)\sim \psi(\zeta)\left\{\frac{\airya\left(\nu^{\frac{2}{3}}\zeta\right)}{\nu ^\frac{4}{3}}\sum_{s=0}^{\infty}\frac{A_{s,1}(\zeta)}{\nu^{2s}}+\frac{\airya'\left(\nu^{\frac{2}{3}}\zeta\right)}{\nu ^\frac{2}{3}}\sum_{s=0}^{\infty}\frac{B_{s,1}(\zeta)}{\nu^{2s}}\right\}
\end{equation}
as $\nu\to \infty$, respectively. Here, $\zeta$ and $x$ are related in a one-to-one manner by the equations
\begin{align*}
\zeta&=\left\{\frac{3}{2}\int_{x}^{1}\frac{(1-x^2)^\frac{1}{2}}{x}dx\right\}^{\frac{2}{3}}\\
&=\left\{\frac{3}{2}\ln\frac{1+(1-x^2)^\frac{1}{2}}{x}-\frac{3}{2}(1-x^2)^\frac{1}{2}\right\}^\frac{2}{3},\quad 0<x\leq1,
\end{align*}
and
\begin{align*}
\zeta=-\left\{\frac{3}{2}\int_{1}^{x}\frac{(x^2-1)^\frac{1}{2}}{x}dx\right\}=-\left\{\frac{3}{2}(x^2-1)^\frac{1}{2}-\frac{3}{2} \arcsec x\right\}^\frac{2}{3},\quad x\geq 1,
\end{align*}
where
\begin{equation}\label{phi_exp}
\phi(\zeta)=\left(\frac{4\zeta}{1-x^2}\right)^\frac{1}{4}=\left(-\frac{2}{x}\diff{x}{\zeta}\right)^\frac{1}{2},
\end{equation}
and $\psi(\zeta)=\frac{2}{x\phi(\zeta)}$. The coefficients $A_{s,0}(\zeta),B_{s,0}(\zeta),A_{s,1}(\zeta)\text{ and }B_{s,1}(\zeta)$ are analytic in a region containing the real axis and given by a set of recurrence relations. Particularly, $A_{0,0}(\zeta)=1,$ $B_{0,1}(\zeta)=1,$ and if $0\leq x\leq 1,$ then
$$B_{0,0}(\zeta)=-\frac{3\tau-5\tau^3}{324\zeta^\frac{1}{2}}-\frac{5}{48\zeta^2}$$
and
$$A_{1,0}(\zeta)=\frac{81\tau^2-462\tau^4+385\tau^6}{1152}-\frac{7(3\tau-5\tau^3)}{1152\zeta^\frac{2}{3}}-\frac{455}{4608\zeta^3},$$
where $\tau=(1-x^2)^{-\frac{1}{2}}.$ Moreover, if $1\leq x \leq\infty,$ then
$$B_{0,0}(\zeta)=-\frac{3\tau-5\tau^3}{324\zeta^\frac{1}{2}}-\frac{5}{48\zeta^2}$$
and
\begin{align*}
A_{1,0}(\zeta)=\frac{81\tau^2-462\tau^4+385\tau^6}{1152}-\frac{7(3\tau-5\tau^3)}{1152\zeta^\frac{2}{3}}-\frac{455}{4608\zeta^3},
\end{align*}
where $\tau=(x^2-1)^{-\frac{1}{2}},$ and
\begin{equation}\label{chi_bnd}
A_{0,1}(\zeta)=X(\zeta)+\zeta B_{0,0}(\zeta),\quad
X(\zeta)=\frac{\phi^\prime(\zeta)}{\phi(\zeta)}=\frac{4-z^2\{\phi(\zeta)\}^6}{16\zeta},
\end{equation}
see Olver \cite[p. 16]{Ol62} for more details.

Now, let us revisit other expansions for $J_\nu(\nu x)$ and $J_\nu'(\nu x)$, along with their error bounds, which are similar to \eqref{Bsl_LNu} and \eqref{Bsl_der_LNu}. These expansions were first derived by Olver (see \cite{Ol62,Ol64}) and later discussed by Wong and Lang \cite{WL91}. From \cite[eq. 2.14]{WL91}, we write
\begin{equation*}
J_\nu(\nu x)=\frac{1}{1+\delta_1}\frac{\phi(\zeta)}{\nu^{1/3}}[\airya(\nu^{2/3}\zeta)+\varepsilon_1(\nu,\zeta)],
\end{equation*}
where
\begin{align*}
|\delta_1|\leq \frac{0.217}{\nu} \quad\text{ if }\nu\geq10.
\end{align*}
Also, for negative $\zeta$,
\begin{equation}\label{bnd_epsilon}
	\left|\varepsilon_1(\nu,\zeta)\right|\leq\frac{0.2102}{\sqrt{\pi}(-\nu^{2/3})^{1/4}\nu}e^{0.03/\nu}.
\end{equation}
A uniform asymptotic approximation of $J_\nu^\prime(\nu x)$ (cf. \cite[sect. 5]{WL91}), similar to the above expression is given by
\begin{equation}\label{Bsl_der}
J^\prime_\nu(\nu x)=-\frac{1}{1+\delta_1}\frac{\psi(\zeta)}{\nu^{2/3}}\left[\frac{\airya(\nu^{2/3}\zeta)}{\nu^{2/3}}\left[C_0(\zeta)-\zeta B_0(\zeta)\right]+\airya'(\nu^{2/3}\zeta)+\eta_1(\nu,\zeta)+\chi(\zeta)\frac{\varepsilon_1(\nu,\zeta)}{\nu^{2/3}}\right],
\end{equation}
where $\phi(\zeta),$ $\psi(\zeta)$ and $\varepsilon_1(\nu,\zeta)$ are the same as in \eqref{phi_exp}, \eqref{Bsl_der_LNu} and \eqref{bnd_epsilon}, respectively. Moreover, $\chi(\zeta)$ and $C_0(\zeta)$  are given by \eqref{chi_bnd} and
\begin{align*}
\left|\eta_1(\nu,\zeta)\right|\leq\frac{0.2102}{\nu}e^{0.30/\nu}N(\nu^{2/3\zeta})
\end{align*}
for negative $\zeta$. Also, (see \cite[p. 750]{Ol63}) we arrive at
\begin{equation}\label{bnd_A'}
N(x)=\left[E^2(x)\airya'^2(x)+E^{-2}(x)\airyb'^2(x)\right]^{1/2}.
\end{equation}
For $x\leq-1$, the bound for $N(x)$ \cite[p. 752]{Ol63} is given by
\begin{equation}\label{N_bnd}
0<|x|^{-1/4}N(x)<0.06.
\end{equation}

\subsection{Expression for $J_\nu^{(n)}(x)$ and $J_\nu^{(n)}(x\nu)$}\label{J^n=J+J'}
We begin by deriving an expression for the $n$-th derivative of the Bessel functions in terms of its fist-order derivative. This representation facilitates the extension of known results for Bessel functions to their higher-order derivatives. For real values of $\nu$ and $x$, the Bessel differential equation is given by
\begin{align*}
x^2J_\nu^{\prime\prime}(x)+xJ_\nu^\prime(x)+(x^2-\nu^2)J_\nu(x)=0.
\end{align*}
For $x \neq 0$, we rewrite the above equation as follows:
\begin{equation}\label{eq: 2nd_der_Bsl}
J_\nu^{\prime\prime}(x)=-\frac{1}{x}J_\nu^\prime(x)+\left(\frac{\nu^2}{x^2}-1\right)J_\nu(x)=\beta_2(x,\nu)J_\nu^\prime(x)+\gamma_2(x,\nu)J_\nu(x),
\end{equation}
where
\begin{equation}\label{bg_2}
\beta_2(x,\nu)=-\frac{1}{x} \text{ and } \gamma_2(x,\nu)=\left(\frac{\nu^2}{x^2}-1\right).
\end{equation} 
Further differentiating  \eqref{eq: 2nd_der_Bsl} and using the same equation, we obtain
\begin{align*}
J_\nu^{(3)}(x)&=\beta_2^\prime(x,\nu)J_\nu^\prime(x)+\beta_2(x,\nu)J_\nu^{\prime\prime}(x)+\gamma_2^\prime(x,\nu)J_\nu(x)+\gamma_2(x,\nu)J_\nu^\prime(x)\\
&=(\beta_2^\prime(x,\nu)+\gamma_2(x,\nu))J_\nu^\prime(x)+\beta_2(x,\nu)(\beta_2(x,\nu)J_\nu^\prime(x)+\gamma_2(x,\nu)J_\nu(x))+\gamma_2^\prime(x,\nu)J_\nu(x)\\
&=(\beta_2^2(x,\nu)+\beta_2^\prime(x,\nu)+\gamma_2(x,\nu))J_\nu^\prime(x)+(\beta_2(x,\nu)\gamma_2(x,\nu)+\gamma_2^\prime(x,\nu))J_\nu(x)\\
&=\beta_3(x,\nu)J_\nu^\prime(x)+\gamma_3(x,\nu)J_\nu(x),
\end{align*}
where
\begin{align*}
\beta_3(x,\nu)=\beta_2^2(x,\nu)+\beta_2^\prime(x,\nu)+\gamma_2(x,\nu)\quad\text{ and }\quad\gamma_3(x,\nu)=\beta_2(x,\nu)\gamma_2(x,\nu)+\gamma_2^\prime(x,\nu).
\end{align*}
Proceeding in a similar manner and applying mathematical induction, we obtain the following expression for $n \geq 3$
\begin{equation}\label{n_der_Bsl}
J_\nu^{(n)}(x)=\beta_n(x,\nu)J_\nu^\prime(x)+\gamma_n(x,\nu)J_\nu(x),
\end{equation}
where
\begin{align*}
\beta_n(x,\nu)=\beta_{n-1}^2(x,\nu)+\beta_{n-1}^\prime(x,\nu)+\gamma_{n-1}(x,\nu)\quad\text{ and }\quad
\gamma_{n}(x,\nu)=\beta_{n-1}(x,\nu)\gamma_{n-1}(x,\nu)+\gamma_{n-1}^\prime(x,\nu)
\end{align*}
and $\beta_2$ and $\gamma_2$ are given by \eqref{bg_2}. It is interesting to note that \eqref{n_der_Bsl} can be written in terms of Bessel functions and its first and second derivatives. Now, let us consider the case when we replace $x$ by $\nu x$ in \eqref{eq: 2nd_der_Bsl}. We obtain that
\begin{equation}\label{J_2_der}
J_\nu^{(2)}(\nu x)=-\frac{1}{\nu x}J^\prime(\nu x)+\left(\frac{1}{x^2}-1\right)J_\nu(\nu x).
\end{equation}
Differentiating the above equation with respect to $x$, we deduce that
\begin{align*}
J_\nu^{(3)}(\nu x)&=\left(\frac{2}{\nu^2 x^2}+\left(\frac{1}{x^2}-1\right)\right)J^\prime(\nu x) -\left(\frac{2}{\nu x^3}+\frac{1}{\nu x}\left(\frac{1}{x^2}-1\right)\right)J_\nu(\nu x)\\
&=\left(\frac{1}{x^2}-1\right)J_\nu^\prime(\nu x)+\frac{2}{\nu^2x^2}J^\prime_\nu(\nu x)-\frac{2}{\nu x^3}J_\nu(\nu x)-\frac{1}{\nu x}\left(\frac{1}{x^2}-1\right)J_\nu(\nu x).
\end{align*}
For large $\nu$, the terms involving $\frac{1}{\nu}$ contribute to the error, while the leading term determines the principal part of the asymptotic expansion of $J_\nu^{(3)}(\nu x)$. Therefore, we rewrite the above equation as
\begin{align*}
J_\nu^{(3)}(\nu x)=\left(\frac{1}{x^2}-1\right)J_\nu^\prime(\nu x)+\frac{1}{\nu}\left[f_3\left(\frac{1}{x},\frac{1}{\nu}\right)J_\nu(\nu x)+g_3\left(\frac{1}{x},\frac{1}{\nu}\right)J_\nu^\prime(\nu x)\right],
\end{align*}
which, upon differentiating with respect to $x$, yields
\begin{align*}
\nu J_\nu^{(4)}(\nu x)=-\frac{2}{\nu x^3}J_\nu^\prime(\nu x)+\left(\frac{1}{x^2}-1\right)J_\nu^{\prime\prime}(\nu x)+\frac{1}{\nu}\left(\frac{1}{x^2}\right)\left[f_3^\prime\left(\frac{1}{x},\frac{1}{\nu}\right)J_\nu(\nu x)+g_3^\prime\left(\frac{1}{x},\frac{1}{\nu}\right)J_\nu^\prime(\nu x)\right],
\end{align*}
and this can be rewritten as
\begin{align*}
J_\nu^{(4)}(\nu x)=\left(\frac{1}{x^2}-1\right)^2J_\nu(\nu x)+\frac{1}{\nu}\left[f_4\left(\frac{1}{x},\frac{1}{\nu}\right)J_\nu(\nu x)+g_4\left(\frac{1}{x},\frac{1}{\nu}\right)J_\nu^\prime(\nu x)\right].
\end{align*}
We use mathematical induction to obtain the recurrence relation for the coefficients in the expression of $n$-th order derivative of Bessel functions as follows. 
For $n\geq1$, let us write the $(2n+1)$-th derivative of Bessel functions as
\begin{equation}\label{Besl_odd_der}
J_\nu^{(2n+1)}(\nu x)=\left(\frac{1}{x^2}-1\right)^nJ_\nu^\prime(\nu x)+\left[\mathcal{F}_{2n+1}\left(\frac{1}{x},\frac{1}{\nu}\right)J_\nu(\nu x)+\mathcal{G}_{2n+1}\left(\frac{1}{x},\frac{1}{\nu}\right)J_\nu^\prime(\nu x)\right],
\end{equation}
where $$\mathcal{F}_{2n+1}\left(\frac{1}{x},\frac{1}{\nu}\right)=\frac{1}{\nu}f_{2n+1}\left(\frac{1}{x},\frac{1}{\nu}\right)\ \mbox{and} \ \mathcal{G}_{2n+1}\left(\frac{1}{x},\frac{1}{\nu}\right)=\frac{1}{\nu}g_{2n+1}\left(\frac{1}{x},\frac{1}{\nu}\right).$$
Here $f_{2n+1}\left(\frac{1}{x},\frac{1}{\nu}\right) \text{ and }g_{2n+1}\left(\frac{1}{x},\frac{1}{\nu}\right)$ are polynomials in $\frac{1}{x}$. On differentiating both sides of \eqref{Besl_odd_der} with respect to $x$ we obtain that 
\begin{align*}
\nu J_\nu^{(2n+2)}(\nu x)=&-\frac{2n}{x^3}\left(\frac{1}{x^2}-1\right)^{n-1}J_\nu^\prime(\nu x)+\nu\left(\frac{1}{x^2}-1\right)^nJ_\nu^{\prime\prime}(\nu x)+\left(-\frac{1}{x^2}\right)\frac{\partial \mathcal{F}_{2n+1}\left(\frac{1}{x},\frac{1}{\nu}\right)}{\partial \left(\frac{1}{x}\right)}J_\nu(\nu x)\\
&+\nu \mathcal{F}_{2n+1}\left(\frac{1}{x},\frac{1}{\nu}\right)J_\nu^\prime(\nu x)-\frac{1}{x^2}\frac{\partial \mathcal{G}_{2n+1}\left(\frac{1}{x},\frac{1}{\nu}\right)}{\partial \left(\frac{1}{x}\right)}J_\nu^\prime(\nu x)+\nu J_\nu^{\prime\prime}(\nu x)\mathcal{G}_{2n+1}\left(\frac{1}{x},\frac{1}{\nu}\right).
\end{align*}
Upon dividing both sides of the above equation by $\nu$, substituting $J_\nu^{\prime\prime}(\nu x)$ from \eqref{J_2_der} and subsequently rearranging the terms, we obtain
\begin{align*}
J_\nu^{(2n+2)}(\nu x)&=\left(\frac{1}{x^2}-1\right)^{n+1}J_\nu(\nu x)+J_\nu(\nu x)\left[\mathcal{G}_{2n+1}\left(\frac{1}{x^2}-1\right)-\frac{1}{\nu x^2}\frac{\partial\mathcal{F}_{2n+1}\left(\frac{1}{x},\frac{1}{\nu}\right)}{\partial \left(\frac{1}{x}\right)}\right]\\
&+J_\nu^\prime(\nu x)\left[-\frac{1}{\nu x}\mathcal{G}_{2n+1}\left(\frac{1}{x},\frac{1}{\nu}\right)-\frac{2n}{\nu x^3}\left(\frac{1}{x^2}-1\right)^{n-1}+\mathcal{F}_{2n+1}\left(\frac{1}{x},\frac{1}{\nu}\right)-\frac{1}{\nu x^2}\frac{\partial \mathcal{G}_{2n+1}\left(\frac{1}{x},\frac{1}{\nu}\right)}{\partial\left(\frac{1}{x}\right)}\right]\\
&=\left(\frac{1}{x^2}-1\right)^{n+1}J_\nu(\nu x)+\mathcal{F}_{2n+2}\left(\frac{1}{x},\frac{1}{\nu}\right)J_\nu(\nu x)+\mathcal{G}_{2n+2}\left(\frac{1}{x},\frac{1}{\nu}\right)J_\nu^\prime(\nu x),
\end{align*}
where
$$\quad\mathcal{F}_{2n+2}\left(\frac{1}{x},\frac{1}{\nu}\right)=\mathcal{G}_{2n+1}\left(\frac{1}{x^2}-1\right)-\frac{1}{\nu x^2}\frac{\partial\mathcal{F}_{2n+1}\left(\frac{1}{x},\frac{1}{\nu}\right)}{\partial \left(\frac{1}{x}\right)}$$
and
$$\mathcal{G}_{2n+2}\left(\frac{1}{x},\frac{1}{\nu}\right)=-\frac{1}{\nu x}\mathcal{G}_{2n+1}\left(\frac{1}{x},\frac{1}{\nu}\right)-\frac{2n}{\nu x^3}\left(\frac{1}{x^2}-1\right)^{n-1}+\mathcal{F}_{2n+1}\left(\frac{1}{x},\frac{1}{\nu}\right)-\frac{1}{\nu x^2}\frac{\partial \mathcal{G}_{2n+1}\left(\frac{1}{x},\frac{1}{\nu}\right)}{\partial\left(\frac{1}{x}\right)}.$$
It is worth to note that by using the fact that $\mathcal{F}_{2n+1}\left(\frac{1}{x},\frac{1}{\nu}\right)=\frac{1}{\nu}f_{2n+1}\left(\frac{1}{x},\frac{1}{\nu}\right)$ and $\mathcal{G}_{2n+1}\left(\frac{1}{x},\frac{1}{\nu}\right)=\frac{1}{\nu}g_{2n+1}\left(\frac{1}{x},\frac{1}{\nu}\right)$ we can write
\begin{align*}
\mathcal{F}_{2n+2}\left(\frac{1}{x},\frac{1}{\nu}\right)=\frac{1}{\nu}f_{2n+2}\left(\frac{1}{x},\frac{1}{\nu}\right)\ \ \mbox{and}\ \ \mathcal{G}_{2n+2}\left(\frac{1}{x},\frac{1}{\nu}\right)=\frac{1}{\nu}g_{2n+2}\left(\frac{1}{x},\frac{1}{\nu}\right),
\end{align*}
where $f_{2n+2}\left(\frac{1}{x},\frac{1}{\nu}\right)$ and $g_{2n+2}\left(\frac{1}{x},\frac{1}{\nu}\right)$ are polynomials in $\frac{1}{x}$. Similar results can be also established for the arbitrary odd derivatives of Bessel functions. Let us write
\begin{align*}
J_\nu^{(2n)}(\nu x)=\left(\frac{1}{x^2}-1\right)^nJ_\nu(\nu x)+\left[\mathcal{F}_{2n}\left(\frac{1}{x},\frac{1}{\nu}\right)J_\nu(\nu x)+\mathcal{G}_{2n}\left(\frac{1}{x},\frac{1}{\nu}\right)J_\nu^\prime(\nu x)\right].
\end{align*}
Differentiating both sides of the above equation with respect to $x$ and substituting $J_\nu^{\prime\prime}(x)$ by using \eqref{J_2_der} and then rearranging the expression, gives
\begin{align*}
J_\nu^{(2n+1)}(\nu x)=\left(\frac{1}{x^2}-1\right)^{n}J_\nu^\prime(\nu x)+\mathcal{F}_{2n+1}\left(\frac{1}{x},\frac{1}{\nu}\right)J_\nu(\nu x)+\mathcal{G}_{2n+1}\left(\frac{1}{x},\frac{1}{\nu}\right)J_\nu^\prime(\nu x),
\end{align*}
where
$$\quad\mathcal{F}_{2n+1}\left(\frac{1}{x},\frac{1}{\nu}\right)=-\frac{2n}{\nu x^3}\left(\frac{1}{x^2}-1\right)^{n-1}+\left(\frac{1}{x^2}-1\right)\mathcal{G}_{2n}\left(\frac{1}{x},\frac{1}{\nu}\right)-\frac{1}{\nu x^2}\frac{\partial\mathcal{F}_{2n}\left(\frac{1}{x},\frac{1}{\nu}\right)}{\partial \left(\frac{1}{x}\right)}$$
and
$$\mathcal{G}_{2n+2}\left(\frac{1}{x},\frac{1}{\nu}\right)=-\frac{1}{\nu x}\mathcal{G}_{2n}\left(\frac{1}{x},\frac{1}{\nu}\right)+\mathcal{F}_{2n}\left(\frac{1}{x},\frac{1}{\nu}\right)-\frac{1}{\nu x^2}\frac{\partial \mathcal{G}_{2n}\left(\frac{1}{x},\frac{1}{\nu}\right)}{\partial\left(\frac{1}{x}\right)}.$$
Notice that $\mathcal{F}_{2n+1}\left(\frac{1}{x},\frac{1}{\nu}\right)$ and $\mathcal{G}_{2n+1}\left(\frac{1}{x},\frac{1}{\nu}\right)$ can be expressed as
\begin{align*}
\mathcal{F}_{2n+1}\left(\frac{1}{x},\frac{1}{\nu}\right)=\frac{1}{\nu}f_{2n+1}\left(\frac{1}{x},\frac{1}{\nu}\right)\ \ \mbox{and}\ \ \mathcal{G}_{2n+1}\left(\frac{1}{x},\frac{1}{\nu}\right)=\frac{1}{\nu}g_{2n+1}\left(\frac{1}{x},\frac{1}{\nu}\right),
\end{align*}
where $f\left(\frac{1}{x},\frac{1}{\nu}\right)$ and $g\left(\frac{1}{x},\frac{1}{\nu}\right)$ are polynomials in $\frac{1}{x}$. We would like to mention that we can find similar type of relations for the modified Bessel functions and their derivatives which would help us to find out new bounds for ratio of modified Bessel functions and derivatives by modifying the method used by N\.asell  in \cite{Na78}.

\subsection{Asymptotic behaviour of zeros of  $J_\nu^{(n)}(x) \text{ and } J_\nu^{(n)}(x\nu)$}\label{zeros_behav}
From the discussion given in subsection \ref{J^n=J+J'}, for $n\geq2$, let us write the $2n$-th derivative of the Bessel functions as
\begin{equation}\label{2n_der_LN}
J_\nu^{(2n)}(\nu x)=\left(\frac{1}{x^2}-1\right)^nJ_\nu (\nu x)+\frac{1}{\nu}\left[f_{2n}\left(\frac{1}{x},\frac{1}{\nu}\right)J_\nu(\nu x)+g_{2n}\left(\frac{1}{x},\frac{1}{\nu}\right)J_\nu^\prime(\nu x)\right].
\end{equation}
Differentiating the above equation and rearranging the terms, we obtain
\begin{equation}\label{2n1_der_LN}
J_\nu^{(2n+1)}(\nu x)=\left(\frac{1}{x^2}-1\right)^nJ_\nu^\prime (\nu x)+\frac{1}{\nu}\left[f_{2n+1}\left(\frac{1}{x},\frac{1}{\nu}\right)J_\nu(\nu x)+g_{2n+1}\left(\frac{1}{x},\frac{1}{\nu}\right)J_\nu^\prime(\nu x)\right].
\end{equation}
Similarly, by differentiating this equation and rearranging the terms, we obtain the corresponding formula for the even-order derivative
\begin{align*}
J_\nu^{(2n+2)}(\nu x)=\left(\frac{1}{x^2}-1\right)^{n+1}J_\nu (\nu x)+\frac{1}{\nu}\left[f_{2n+2}\left(\frac{1}{x},\frac{1}{\nu}\right)J_\nu(\nu x)+g_{2n+2}\left(\frac{1}{x},\frac{1}{\nu}\right)J_\nu^\prime(\nu x)\right].
\end{align*}
We will use equations \eqref{2n_der_LN} and \eqref{2n1_der_LN} to denote the expression for general even and odd derivatives of Bessel functions for further analysis. We note that, in this paper, we restrict our attention to the case $\zeta < 0$, which corresponds to the location of the zeros of the derivatives of the Bessel functions. In further discussion we will use equations \eqref{2n_der_LN} and \eqref{2n1_der_LN}. Using equation \eqref{phi_exp},  we obtain
\begin{align*}
\left(\frac{1-x^2}{x^2}\right)=\frac{4\zeta}{x^2(\phi(\zeta))^4}.
\end{align*}
After replacing $\left(\frac{1-x^2}{x^2}\right)$ in the leading term of \eqref{2n_der_LN} we have that
\begin{align*}
J_\nu^{(2n)}(\nu x)=\left(\frac{4\zeta}{x^2\phi(\zeta)^4}\right)^nJ_\nu (\nu x)+\frac{1}{\nu}\left[f_{2n}\left(\frac{1}{x},\frac{1}{\nu}\right)J_\nu(\nu x)+g_{2n}\left(\frac{1}{x},\frac{1}{\nu}\right)J_\nu^\prime(\nu x)\right].
\end{align*}
We write the above equation as
\begin{equation}\label{2n_der_exp}
J_\nu^{(2n)}(\nu x)=\left(\frac{4\zeta}{x^2\phi(\zeta)^4}\right)^n\left[J_\nu (\nu x)+\frac{1}{\nu}\left(\frac{x^2\phi(\zeta)^4}{4\zeta}\right)^n\left[f_{2n}\left(\frac{1}{x},\frac{1}{\nu}\right)J_\nu(\nu x)+g_{2n}\left(\frac{1}{x},\frac{1}{\nu}\right)J_\nu^\prime(\nu x)\right]\right].
\end{equation}
Note that $\phi(\zeta)$ is a non-negative increasing function on $(-\infty, 0]$ (see \cite{La89}). Consequently, for $\zeta < 0$, we have $0 \leq \phi(\zeta) \leq \phi(0) = 2^{1/3}$. In view of the assumption $n \ll \nu$ and the one-to-one correspondence between $x$ and $\zeta$, we conclude that for fixed $x$, the quantity $$\left(x^2\frac{\phi(\zeta)^4}{4\zeta}\right)^n$$ remains bounded for $\zeta < 0$. Moreover, it has been established in \cite[p. 10]{Ol62} (cf. \cite[§5]{WL91}) that $$\left|\frac{\phi^\prime(\zeta)}{\phi(\zeta)}\right|<0.160.$$
For further analysis let us write \eqref{2n_der_exp} as
 \begin{equation}\label{2n_der_exp2}
 	J_\nu^{(2n)}(\nu x)=\left(\frac{4\zeta}{x^2\phi
 		(\zeta)^4}\right)^n\left[J_\nu (\nu x)+\frac{1}{\nu^{2/3}}\frac{x^{2n}\phi(\zeta)^{4n}}{4^n\zeta^{n-1}}\left[f_{2n}\left(\frac{1}{x},\frac{1}{\nu}\right)\frac{J_\nu(\nu x)}{\nu^{1/3}\zeta}+g_{2n}\left(\frac{1}{x},\frac{1}{\nu}\right)\frac{J_\nu^\prime(\nu x)}{\nu^{1/3}\zeta}\right]\right].
 \end{equation}
 In order to simplify the equation \eqref{2n_der_exp2}, we analyze the expressions
 $$\frac{J_\nu(\nu x)}{\nu^{1/3}\zeta}\ \ \mbox{and}\ \ \frac{J_\nu^\prime(\nu x)}{\nu^{1/3}\zeta}.$$
 Using \eqref{Bsl_der}, we can write
\begin{align*}
	\frac{J^\prime_\nu(\nu x)}{\nu^{1/3}\zeta}&=-\frac{1}{1+\delta_1}\frac{\psi(\zeta)}{\nu\zeta}\left[\frac{\airya(\nu^{2/3}\zeta)}{\nu^{2/3}}\left\{C_0(\zeta)-\zeta B_0(\zeta)\right\}+\airya'(\nu^{2/3}\zeta)+\eta_1(\nu,\zeta)+\chi(\zeta)\frac{\varepsilon_1(\nu,\zeta)}{\nu^{2/3}}\right]\\
	&=-\frac{1}{1+\delta_1}\frac{\psi(\zeta)}{\nu^{1/3}}\left[\frac{\airya(\nu^{2/3}\zeta)}{\nu^{4/3}\zeta}\left\{C_0(\zeta)-\zeta B_0(\zeta)\right\}+\frac{\airya'(\nu^{2/3}\zeta)}{\nu^{2/3}\zeta}+\frac{\eta_1(\nu,\zeta)}{\nu^{2/3}\zeta}+\chi(\zeta)\frac{\varepsilon_1(\nu,\zeta)}{\nu^{4/3}\zeta}\right].
\end{align*}
By using \eqref{bnd_A'}, \eqref{N_bnd} and the fact that $\zeta<0,$ we obtain that
\begin{align*}
	\airya'(\nu^{2/3}\zeta)\leq\frac{ N(\nu^{2/3}\zeta)}{\nu^{2/3}\zeta}\leq0.06.
\end{align*}
Moreover, from \eqref{chi_bnd} and \eqref{bnd_epsilon} we conclude that $\frac{J^\prime_\nu(\nu x)}{\nu^{1/3}\zeta}$ is bounded, for specified $\zeta<0$ and large $\nu$. Similarly, we can show that the quantity $\frac{J_\nu(\nu x)}{\nu^{1/3}\zeta}$ in \eqref{2n_der_exp2} is also bounded. Based on the above discussion and \eqref{2n_der_exp2}, for large $\nu$, it follows that
\begin{equation}\label{2n_der_asymp}
J_\nu^{(2n)}(\nu x)=\frac{1}{(1+\delta_1)\nu^{1/3}}\left(\frac{4\zeta}{x^2\phi(\zeta)^4}\right)^n\left[\airya\left(\nu^{2/3}\zeta\right)+\delta(\nu,\zeta)\right],
\end{equation}
with
\begin{equation}\label{delta_bnd}
\left|\delta(\nu,\zeta)\right|\leq \frac{M}{\nu^\alpha},
\end{equation}
for some $\alpha>0$. This result is modified to the result obtained in \cite{WL90}. We now use a result of Hethcote \cite{He70}, which establishes a connection between the asymptotic behavior of transcendental functions and the asymptotic behavior of their zeros. In particular, we apply the following theorem \cite[Theorem 1]{He70} to derive the asymptotic expansion for $\zeta_{\nu, k}$.

\begin{theorem}\label{Heterto_trm}
In the interval $[a-\rho,a+\rho],$ suppose $f(\tau)=g(\tau)+\varepsilon(\tau)$ is continuous, $g(\tau)$ is differentiable, $g(a)=0, m=\min\left|g^\prime(\tau)\right|>0,$ and
\begin{equation}\label{max_e}
E=\max|\varepsilon(\tau)|<\min\{|g(a-\rho),|g(a+\rho)|\}.
\end{equation}
Then there exists a zero $c$ of $f(\tau)$ in the interval such that $|c-a|\leq E/m.$
\end{theorem}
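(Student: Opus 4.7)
The plan is to obtain the zero of $f$ via the Intermediate Value Theorem and then estimate its distance from $a$ via the Mean Value Theorem. First, I would observe that the hypothesis $m = \min|g'(\tau)| > 0$ on $[a-\rho, a+\rho]$ forces $g'$ to have constant sign there: a derivative automatically has the intermediate value property (Darboux's theorem), so if $g'$ took values of both signs it would have to vanish somewhere in between, contradicting $m > 0$. Consequently $g$ is strictly monotonic on $[a-\rho, a+\rho]$, and since $g(a) = 0$ with $a$ interior to the interval, the two endpoint values $g(a-\rho)$ and $g(a+\rho)$ have opposite signs.

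Next I would transfer the sign change from $g$ to $f$. Because $|f(\tau) - g(\tau)| = |\varepsilon(\tau)| \leq E$ throughout the interval, and \eqref{max_e} gives $E < |g(a-\rho)|$ and $E < |g(a+\rho)|$, the perturbation is too small to flip either endpoint sign; hence $f(a-\rho)$ and $f(a+\rho)$ inherit opposite signs from $g(a-\rho)$ and $g(a+\rho)$. Since $f$ is continuous on the interval by assumption, the Intermediate Value Theorem produces a point $c \in [a-\rho, a+\rho]$ with $f(c) = 0$.

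For the quantitative bound, I would use the defining equation $f = g + \varepsilon$ at $c$: from $f(c)=0$ we get $g(c) = -\varepsilon(c)$, so $|g(c)| \leq E$. Applying the Mean Value Theorem to $g$ on the subinterval with endpoints $a$ and $c$, there exists $\xi$ strictly between them satisfying $g(c) - g(a) = g'(\xi)(c-a)$. Using $g(a)=0$ and $|g'(\xi)| \geq m$, this becomes
\[
m|c-a| \leq |g'(\xi)||c-a| = |g(c)| \leq E,
\]
which yields the desired estimate $|c - a| \leq E/m$.

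The only genuinely delicate point is the justification of strict monotonicity of $g$ from a nonvanishing (not necessarily continuous) derivative, which is why I would invoke Darboux's theorem explicitly; the remaining steps are routine applications of the Intermediate Value and Mean Value theorems.
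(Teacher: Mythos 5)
Your proof is correct. Note, however, that the paper does not prove this statement at all: it is imported verbatim as Theorem~1 of Hethcote \cite{He70}, so there is no in-paper argument to compare against. Your route --- Darboux's theorem to get a fixed sign of $g'$ and hence strict monotonicity of $g$, opposite signs of $g$ (and therefore of $f$, since $E$ is smaller than both endpoint values $|g(a\pm\rho)|$) at the endpoints, the Intermediate Value Theorem for existence of $c$, and the Mean Value Theorem combined with $|g(c)|=|\varepsilon(c)|\leq E$ for the bound $|c-a|\leq E/m$ --- is the standard and essentially the original argument, and it is complete; the explicit appeal to Darboux's theorem is a nice touch, since the hypotheses do not assume $g'$ continuous.
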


Notice that, unlike the error bound in \cite[Sec. 5]{WL91}, the term $\delta_{2n}(\nu, \zeta)$ is not confirmed to be function of $\nu^{2/3}\zeta$, which limits the ability to predict the interaction behavior of $\nu\text{ and }\zeta$ in \eqref{2n_der_asymp}. To address this issue, we use a slightly modified version of the procedure followed by Wong and Lang \cite{WL91}. We would like to mention that while they applied the idea to the first nine negative zeros of the Airy function, our approach extends to all zeros, including those associated with the derivatives of Airy functions, by making use of the fact that $\nu$ is large. To apply Theorem \ref{Heterto_trm} in our context, we assume that $\tau=\zeta$. From \eqref{2n_der_asymp}, we write
\begin{align*}
\frac{J_\nu^{(2n)}(\nu x)(1+\delta_1)\nu^{1/3}x^{2n}(\phi(\zeta))^{4n}}{(4\zeta)^n}=\airya(\nu^{2/3}\zeta)+\delta(\nu,\zeta).
\end{align*}
After comparing the above expression with Theorem \ref{Heterto_trm}, we consider
\begin{align*}
f(\zeta)=\frac{J_\nu^{(2n)}(\nu x)(1+\delta_1)\nu^{1/3}x^{2n}(\phi(\zeta))^{4n}}{(4\zeta)^n},\ g(\zeta)=\airya\left(\nu^\frac{2}{3}\zeta\right)\ \ \mbox{and}\ \ \epsilon(\zeta)=\delta(\nu,\zeta).
\end{align*}
Also, let us denote the $k$-th negative root of $\airya(x)$ and $\airya'(x)$ by $a_k$ and $a_k'$, respectively. Then set $a=\frac{a_k}{\nu^{2/3}}$. Further let us choose $\rho_k$ such that
\begin{equation}\label{ineqAi}
\frac{a^\prime_{k+1}}{\nu^{2/3}}<\frac{a_k}{\nu^{2/3}}-\rho_k<\frac{a_k}{\nu^{2/3}}+\rho_k<\frac{a^\prime_k}{\nu^{2/3}}.
\end{equation}
The above inequality holds true due to the fact that $\airya(x)$ and $\airya'(x)$ have alternating zeros. Notice that if we replace $a$ and $\rho$ by $\frac{a_k}{\nu^{2/3}}$ and $\rho_k$ in Theorem \ref{Heterto_trm}, then
$$\left|g(a-\rho)\right|=\left|\airya(\nu^{2/3}(a-\rho))\right|=\left|\airya(a_k-\nu^{2/3}\rho_k)\right|>\max|\epsilon(\zeta)|.$$
The above inequality follows from the fact that for large $\nu$, from \eqref{ineqAi}, $\nu^{2/3}\rho_k$ is a number which satisfies the inequality
\begin{equation*}
0<\nu^{2/3}\rho_k<a^\prime_k-a_k,
\end{equation*}
while $\epsilon(\zeta)$ vanishes for large $\nu$.
Similarly, we can conclude that
$$\left|g(a+\rho)\right|>\max|\epsilon(\zeta)|,$$
which means that \eqref{max_e} is satisfied for the provided $\zeta$, namely $\zeta<0$. Another way to verify the inequality \eqref{max_e} under the given setup is by rewriting \eqref{ineqAi} as
\begin{align*}
\frac{a^\prime_{k+1}}{\nu^{2/3}}<\frac{a_k}{\nu^{2/3}}-\frac{\rho_k}{\nu^{2/3}}<\frac{a_k}{\nu^{2/3}}+\frac{\rho_k}{\nu^{2/3}}<\frac{a^\prime_k}{\nu^{2/3}}
\end{align*}
and subsequently estimating  $|g\left(a\pm \rho\right)|.$ In view of the inequality \eqref{ineqAi}, for large $\nu$, the inequality \eqref{max_e} holds with $a$ and $\rho$ replaced by $\frac{a_k}{\nu^{2/3}}$ and $\rho_k$. Moreover, let us denote
\begin{equation}\label{alph_bta}
\alpha_k=\frac{a_k}{\nu^{2/3}}-\rho_k\ \ \mbox{and}\ \ \beta_k=\frac{ a_k}{\nu^{2/3}}+\rho_k.
\end{equation}
Note that $\frac{a_{k+1}^\prime}{\nu^{2/3}}$ and $\frac{a_{k}^\prime}{\nu^{2/3}}$ are two consecutive zeros of $\airya'(\nu^{2/3}\zeta)$ and $\frac{ a_k}{\nu^{2/3}}$ is a critical point of $\airya'(\nu^{2/3}\zeta)$ in the interval $[a_{k+1}^\prime,a_k^\prime].$ Therefore, the minimum value of $\left|\airya'(\nu^{2/3}\zeta)\right|$ is attained at $\alpha_k$ and $\beta_k$ which are defined in \eqref{alph_bta}. Additionally, from \eqref{delta_bnd} we conclude that $\varepsilon(\nu^{2/3}\zeta)=\frac{c_k}{\nu^\alpha}$ for $\alpha>0.$ Since all the conditions of Theorem \ref{Heterto_trm} are satisfied by $f(\zeta),$ $\varepsilon(\zeta)$ and $g(\zeta),$ we conclude that
\begin{align*}
\left|\nu^{2/3}\zeta_{\nu,k}-a_k\right|\leq \frac{c_k}{\nu^\alpha}.
\end{align*}
In other words, for large $\nu$ we write
\begin{equation}\label{zeta_asym}
\zeta_{\nu,k}=\nu^{-\frac{3}{3}} a_k +\mathcal{O}\left(\frac{1}{\nu^\alpha}\right),
\end{equation}
for $\alpha>0$. 

Now, we use the above argument to derive the asymptotic behavior of the zeros of odd derivative of Bessel functions. After replacing $\left(\frac{1-x^2}{x^2}\right)$ in the leading term of \eqref{2n1_der_LN} we write
\begin{align*}
J_\nu^{(2n+1)}(\nu x)=\left(\frac{4\zeta}{x^2\phi(\zeta)^4}\right)^nJ_\nu^\prime (\nu x)+\frac{1}{\nu}\left[f_{2n+1}\left(\frac{1}{x},\frac{1}{\nu}\right)J_\nu(\nu x)+g_{2n+1}\left(\frac{1}{x},\frac{1}{\nu}\right)J_\nu^\prime(\nu x)\right].
\end{align*}
We may rewrite the above equation as
\begin{equation}\label{2n1_der_exp}
J_\nu^{(2n+1)}(\nu x)=\left(\frac{4\zeta}{x^2\phi(\zeta)^4}\right)^n\left[J_\nu^\prime (\nu x)+\frac{1}{\nu}\left(\frac{x^2\phi(\zeta)^4}{4\zeta}\right)^n\left[f_{2n+1}\left(\frac{1}{x},\frac{1}{\nu}\right)J_\nu(\nu x)+g_{2n+1}\left(\frac{1}{x},\frac{1}{\nu}\right)J_\nu^\prime(\nu x)\right]\right].
\end{equation}
For further analysis let us write \eqref{2n1_der_exp} as
\begin{equation}\label{2n1_der_exp2}
J_\nu^{(2n+1)}(\nu x)=\left(\frac{4\zeta}{x^2\phi(\zeta)^4}\right)^n\left[J_\nu^\prime (\nu x)+\frac{1}{\nu^{2/3}}\frac{x^{2n}\phi(\zeta)^{4n}}{4^n\zeta^{n-1}}\left[f_{2n+1}\left(\frac{1}{x},\frac{1}{\nu}\right)\frac{J_\nu(\nu x)}{\nu^{1/3}\zeta}+g_{2n+1}\left(\frac{1}{x},\frac{1}{\nu}\right)\frac{J_\nu^\prime(\nu x)}{\nu^{1/3}\zeta}\right]\right].
\end{equation}
Based on the above discussion, for large $\nu$, it follows that
\begin{equation}\label{2n1_der_asymp}
J_\nu^{(2n+1)}(\nu x)=\frac{1}{(1+\delta_1)\nu^{1/3}}\left(\frac{4\zeta}{x^2\phi(\zeta)^4}\right)^n\left[\airya'\left(\nu^{2/3}\zeta\right)+\delta_{2n+1}(\nu,\zeta)\right],
\end{equation}
with
\begin{equation}\label{delta1_bnd}
\left|\delta_{2n+1}(\nu,\zeta)\right|\leq \frac{M_{2n+1}}{\nu^\alpha},
\end{equation}
for some $\alpha>0$. Now we use Theorem \ref{Heterto_trm} to derive the asymptotic behavior of the zeros of $J_\nu^{2n+1}(\nu x)$.
In order to apply the theorem, let us assume that $\tau=\zeta$.
From \eqref{2n1_der_asymp}, we write
\begin{align*}
\frac{J_\nu^{(2n+1)}(\nu x)(1+\delta_1)\nu^{1/3}x^{2n}(\phi(\zeta))^{4n}}{(4\zeta)^n}=\airya'(\nu^{2/3}\zeta)+\delta_{2n+1}(\nu,\zeta).
\end{align*}
Moreover, replace $f(x)$ by $f_{2n+1}( \zeta)$ in Theorem \ref{Heterto_trm}. In order to use Theorem \ref{Heterto_trm}, we suppose
\begin{align*}
f_{2n+1}(\zeta)=\frac{J_\nu^{(2n+1)}(\nu x)(1+\delta_1)\nu^{1/3}x^{2n}(\phi(\zeta))^{4n}}{(4\zeta)^n}\ \ \mbox{and}\ \ g_{2n+1}(\zeta)=\airya'\left(\nu^\frac{2}{3}\zeta\right),
\end{align*}
and we choose $\rho_k$ such that
\begin{equation}\label{ineqAi'}
\frac{a_{k+1}}{\nu^{2/3}}<\frac{a_k^\prime}{\nu^{2/3}}-\rho_k<\frac{a_k^\prime}{\nu^{2/3}}+\rho_k<\frac{a_k}{\nu^{2/3}}.
\end{equation}
The above inequalities hold true due to the fact that $\airya(x)$ and $\airya'(x)$ have alternating zeros. In view of \eqref{ineqAi'}, for large $\nu$, the inequality \eqref{max_e} holds with $a$ and $\rho$ replaced by $\frac{a_k^\prime}{\nu^{2/3}}$ and $\rho_k$. Further let us denote
\begin{equation}\label{alph_bta1}
\alpha^\prime_k=\frac{a_k^\prime}{\nu^{2/3}}-\rho_k\ \ \mbox{and}\ \ \beta^\prime_k=\frac{ a_k^\prime}{\nu^{2/3}}+\rho_k.
\end{equation}
Note that $\frac{a_{k+1}^\prime}{\nu^{2/3}}\text{ and }\frac{a_{k}^\prime}{\nu^{2/3}}$ are two consecutive zeros of $\airya'(\nu^{2/3}\zeta)$ and $\frac{ a_k}{\nu^{2/3}}$ is a critical point of $\airya(\nu^{2/3}\zeta)$ in the interval $[a_{k+1},a_k]$. Therefore the minimum value of $\left|\airya'(\nu^{2/3}\zeta)\right|$ is attained at $\alpha_k^\prime$ and $\beta_k^\prime$ which are defined by \eqref{alph_bta1}. Additionally, from \eqref{delta1_bnd}, we conclude that $\varepsilon(\nu^{2/3\zeta})=\frac{c_k}{\nu^\alpha}$ for $\alpha>0$. Since all the conditions of Theorem \ref{Heterto_trm} are satisfied by $f_{2n+1}(\zeta),$ $\varepsilon(\zeta)$ and $g_{2n+1}(\zeta)$, we conclude that
\begin{align*}
\left|\nu^{2/3}\zeta_{\nu,k}^{(2n+1)}-a_k^\prime\right|\leq \frac{c_k}{\nu^\alpha}.
\end{align*}
In other words, for large $\nu$ we write
\begin{equation}\label{zeta_asym1}
\zeta_{\nu,k}^{(2n+1)}=\nu^{-\frac{2}{3}} a_k^\prime +\mathcal{O}\left(\frac{1}{\nu^\alpha}\right),
\end{equation}
for $\alpha>0$.

\subsection{Uniform asymptotic expansion for $J_\nu^{(n)}(\nu x)$}
We now use \eqref{Bsl_LNu} and \eqref{Bsl_der_LNu} to obtain the uniform expansion for the even derivatives of Bessel functions.
We apply mathematical induction to determine the expression for $J_\nu^{(n)}(x\nu)$ when $\nu$ is large. It is worth mentioning that we will use the procedure of Olver \cite{Ol54} to obtain the asymptotic expansion for higher-order derivatives of Bessel functions. Similar results can also be derived by expressing the $J_\nu^{(n)}(x)$ in terms of Bessel functions and its first derivative (see \eqref{2n_der_LN} and \eqref{2n1_der_LN}), and then substituting the expansions of $J_\nu(x)$ and $J_\nu^\prime(x)$. However, since the second method is very complicated, we adopt the first one.
For $n\leq 2m+1$, let us suppose that the odd derivatives of Bessel functions have the form given below
\begin{equation}\label{odd_jnux}
J_\nu^{(2n+1)}(\nu x)\sim -\psi_{2n+1}(\zeta)\left\{\frac{\airya\left(\nu^{\frac{2}{3}}\zeta\right)}{\nu ^\frac{4}{3}}\sum_{s=0}^{\infty}\frac{A_{s,2n+1}^{}(\zeta)}{\nu^{2s}}+\frac{\airya'\left(\nu^{\frac{2}{3}}\zeta\right)}{\nu ^\frac{2}{3}}\sum_{s=0}^{\infty}\frac{B_{s,2n+1}^{}(\zeta)}{\nu^{2s}}\right\},
\end{equation}
which align with the \eqref{Bsl_der_LNu} for $n=0$. Differentiating \eqref{odd_jnux} with respect to $\zeta$ we obtain
\begin{align*}
\nu J_\nu^{(2n+2)}(\nu x) \diff{x}{\zeta}&\sim -\left[-\frac{\psi_{2n+1}^\prime(\zeta)}{\psi_{2n+1}(\zeta)}J_\nu^{(2n+1)}(\nu x)+\psi_{2n+1}(\zeta)\left\{\frac{\airya\left(\nu^{\frac{2}{3}}\zeta\right)}{\nu ^\frac{4}{3}}\sum_{s=0}^{\infty}\frac{A_{s,2n+1}^{\prime}(\zeta)}{\nu^{2s}}\right. \right.\\ &\hspace*{2em}\left.\left.+\zeta\nu ^\frac{2}{3} \airya\left(\nu^{\frac{2}{3}}\zeta\right)\sum_{s=0}^{\infty}\frac{B_{s,2n+1}(\zeta)}{\nu^{2s}}+\frac{\airya'\left(\nu^{\frac{2}{3}}\zeta\right)}{\nu ^\frac{2}{3}}\sum_{s=0}^{\infty}\frac{A_{s,2n+1}(\zeta)+B_{s,2n+1}^\prime(\zeta)}{\nu^{2s}}\right\}\right],
\end{align*}
where we used the fact that $\airya''(x)=x\airya(x)$. The above asymptotic equation can be rewritten as
\begin{align*}
J_\nu^{(2n+2)}(\nu x) &\sim -\diff{\zeta}{x}\psi_{2n+1}(\zeta)\left[-\frac{\psi_{2n+1}^\prime(\zeta)}{\nu\psi_{2n+1}^2(\zeta)}J_\nu^{(2n+1)}(\nu x)+\left\{\frac{\airya\left(\nu^{\frac{2}{3}}\zeta\right)}{\nu ^\frac{1}{3}}\sum_{s=0}^{\infty}\frac{A_{s-1,2n+1}^{\prime}(\zeta)+\zeta B_{s,2n+1}(\zeta)}{\nu^{2s}}\right. \right.\\ &\hspace*{2em}\left.\left.+\frac{\airya'\left(\nu^{\frac{2}{3}}\zeta\right)}{\nu ^\frac{5}{3}}\sum_{s=0}^{\infty}\frac{A_{s,2n+1}(\zeta)+B_{s,2n+1}^\prime(\zeta)}{\nu^{2s}}\right\}\right].
\end{align*}
Moreover, by using \eqref{odd_jnux} we obtain that
\begin{align*}
J_\nu^{(2n+2)}(\nu x) &\sim -\diff{\zeta}{x}\psi_{2n+1}(\zeta)\left[\frac{\airya\left(\nu^{\frac{2}{3}}\zeta\right)}{\nu ^\frac{1}{3}}\sum_{s=0}^{\infty}\frac{\mbox{\Large$\chi$}_{2n+2}(\zeta)A_{s-1,2n+1}(\zeta)+A_{s-1,2n+1}^{\prime}(\zeta)+\zeta B_{s-1,2n+1}(\zeta)}{\nu^{2s}}\right. \\ &\hspace*{2em}\left.+\frac{\airya'\left(\nu^{\frac{2}{3}}\zeta\right)}{\nu ^\frac{5}{3}}\sum_{s=0}^{\infty}\frac{\mbox{\Large$\chi$}_{2n+2}(\zeta)B_{s-1,2n+1}(\zeta)+A_{s,2n+1}(\zeta)+B_{s-1,2n+1}^{\prime}(\zeta)}{\nu^{2s}}\right].
\end{align*}
By using \eqref{phi_exp}, we write the above equation as
\begin{equation}\label{2n+2_der}
J_\nu^{(2n+2)}(\nu x) \sim \psi_{2n+2}(\zeta)\left[\frac{\airya\left(\nu^{\frac{2}{3}}\zeta\right)}{\nu ^\frac{1}{3}}\sum_{s=0}^{\infty}\frac{A_{s-1,2n+2}(\zeta)}{\nu^{2s}}+\frac{\airya'\left(\nu^{\frac{2}{3}}\zeta\right)}{\nu ^\frac{5}{3}}\sum_{s=0}^{\infty}\frac{B_{s,2n+2}(\zeta)}{\nu^{2s}}\right],
\end{equation}
where
\begin{align*}
A_{s,2n+2}&=\mbox{\Large$\chi$}_{2n+2}(\zeta)A_{s-1,2n+1}(\zeta)+A_{s-1,2n+1}^{\prime}(\zeta)+\zeta B_{s-1,2n+1}(\zeta),\\
B_{s,2n+2}&=\mbox{\Large$\chi$}_{2n+2}(\zeta)B_{s-1,2n+1}(\zeta)+A_{s,2n+1}(\zeta)+B_{s-1,2n+1}^{\prime}(\zeta),\\
\psi_{2n+2}(\zeta)&=\frac{2\psi_{2n+1}(\zeta)}{x\phi^2(\zeta)}\quad\text{ and }\quad	\mbox{\Large$\chi$}_{2n+2}(\zeta)=\frac{\psi_{2n+1}^\prime(\zeta)}{\psi_{2n+1}(\zeta)}.
\end{align*}
We now derive the expansions for the odd derivatives of $J_\nu(\nu x)$, assuming that the even derivatives are given in the form stated above. To this end, we rewrite the above equation for the $2n$-th derivatives of the $J_\nu(\nu x)$ as
\begin{equation}\label{2n1_der_sim}
J_\nu^{(2n)}(\nu x)\sim \psi_{2n}(\zeta)\left[\frac{\airya\left(\nu^{\frac{2}{3}}\zeta\right)}{\nu ^\frac{1}{3}}\sum_{s=0}^{\infty}\frac{A_{s,2n}(\zeta)}{\nu^{2s}}+\frac{\airya'\left(\nu^{\frac{2}{3}}\zeta\right)}{\nu ^\frac{5}{3}}\sum_{s=0}^{\infty}\frac{B_{s,2n}(\zeta)}{\nu^{2s}}\right],
\end{equation}
where
\begin{align*}
\psi_{2n}(\zeta)=\frac{2\psi_{2n-1}(\zeta)}{x\phi^2(\zeta)}.
\end{align*}
Differentiating \eqref{2n1_der_sim} with respect with respect to $\zeta$ we obtain
\begin{align*}
\nu J_\nu^{(2n+1)}(\nu x) \diff{x}{\zeta}&\sim \left[\frac{\psi_{2n}^\prime(\zeta)}{\psi_{2n}(\zeta)}J_\nu^{(2n)}(\nu x)+\psi_{2n}(\zeta)\left\{\nu ^\frac{1}{3}\airya'\left(\nu^{\frac{2}{3}}\zeta\right)\sum_{s=0}^{\infty}\frac{A_{s,2n}(\zeta)}{\nu^{2s}}+\frac{\airya(\nu^{\frac{2}{3}}\zeta)}{\nu^\frac{1}{3}} \sum_{s=0}^{\infty}\frac{A_{s,2n}^\prime(\zeta)}{\nu^{2s}}\right. \right.\\ &\hspace*{2em}\left.\left.+\frac{\airya''\left(\nu^{\frac{2}{3}}\zeta\right)}{\nu}\sum_{s=0}^{\infty}\frac{B_{s,2n}(\zeta)}{\nu^{2s}}+
\frac{\airya'\left(\nu^{\frac{2}{3}}\zeta\right)}{\nu^{\frac{5}{3}}}\sum_{s=0}^{\infty}\frac{B_{s,2n}^\prime(\zeta)}{\nu^{2s}}\right\}\right].
\end{align*}
Rearranging the terms of the above equation and using the Airy differential equation, we conclude
\begin{align*}
J_\nu^{(2n+1)}(\nu x) &\sim \diff{\zeta}{x} \left[\frac{\psi_{2n}^\prime(\zeta)}{\nu\psi_{2n}(\zeta)}J_\nu^{(2n)}(\nu x)+\psi_{2n}(\zeta)\left\{\frac{\airya\left(\nu^{\frac{2}{3}}\zeta\right)}{\nu ^\frac{4}{3}}\sum_{s=0}^{\infty}\frac{A_{s,2n}^{\prime}(\zeta)+\zeta B_{s,2n}(\zeta)}{\nu^{2s}}\right. \right.\\ &\hspace*{2em}\left.\left.+\frac{\airya''\left(\nu^{\frac{2}{3}}\zeta\right)}{\nu ^\frac{2}{3}}\sum_{s=0}^{\infty}\frac{\nu^2A_{s,2n}(\zeta)+B_{s-1,2n}^\prime(\zeta)}{\nu^{2s}}\right\}\right].
\end{align*}
Moreover, we can write the above equation as
\begin{align*}
J_\nu^{(2n+1)}(\nu x) &\sim \diff{\zeta}{x}\psi_{2n}(\zeta)\left[\frac{\psi_{2n}^\prime(\zeta)}{\nu\psi_{2n}^2(\zeta)}J_\nu^{(2n)}(\nu x)+\left\{\frac{\airya\left(\nu^{\frac{2}{3}}\zeta\right)}{\nu ^\frac{4}{3}}\sum_{s=0}^{\infty}\frac{A_{s,2n}^{\prime}(\zeta)+\zeta B_{s,2n}(\zeta)}{\nu^{2s}}\right. \right.\\ &\hspace*{2em}\left.\left.+\frac{\airya'\left(\nu^{\frac{2}{3}}\zeta\right)}{\nu ^\frac{2}{3}}\sum_{s=0}^{\infty}\frac{\nu^2A_{s,2n}(\zeta)+B_{s-1,2n}^\prime(\zeta)}{\nu^{2s}}\right\}\right],
\end{align*}
which in turn, by using \eqref{2n1_der_sim}, implies that
\begin{align*}
J_\nu^{(2n+1)}(\nu x) &\sim \diff{\zeta}{x}\psi_{2n}(\zeta)\left[\frac{\airya\left(\nu^{\frac{2}{3}}\zeta\right)}{\nu ^\frac{4}{3}}\sum_{s=0}^{\infty}\frac{\mbox{\Large$\chi$}_{2n}(\zeta)A_{s,2n}(\zeta)+A_{s,2n}^{\prime}(\zeta)+\zeta B_{s,2n}(\zeta)}{\nu^{2s}}\right. \\ &\hspace*{2em}\left.+\frac{\airya'\left(\nu^{\frac{2}{3}}\zeta\right)}{\nu ^\frac{2}{3}}\sum_{s=0}^{\infty}\frac{A_{s,2n}(\zeta)+\mbox{\Large$\chi$}_{2n}(\zeta)B_{s-1,2n}(\zeta)+B_{s-1,2n}^\prime(\zeta)}{\nu^{2s}}\right].
\end{align*}
By using \eqref{phi_exp}, we can rewrite the above equation as
\begin{equation*}\label{2n1_der}
J_\nu^{(2n+1)}(\nu x) \sim -\psi_{2n+1}(\zeta)\left[\frac{\airya\left(\nu^{\frac{2}{3}}\zeta\right)}{\nu ^\frac{4}{3}}\sum_{s=0}^{\infty}\frac{A_{s,2n+1}(\zeta)}{\nu^{2s}}+\frac{\airya'\left(\nu^{\frac{2}{3}}\zeta\right)}{\nu ^\frac{2}{3}}\sum_{s=0}^{\infty}\frac{B_{s,2n+1}(\zeta)}{\nu^{2s}}\right],
\end{equation*}
where
\begin{align*}
A_{s,2n+1}&=\mbox{\Large$\chi$}_{2n}(\zeta)A_{s,2n}(\zeta)+A_{s,2n}^{\prime}(\zeta)+\zeta B_{s,2n}(\zeta),\\
B_{s,2n+1}&=A_{s,2n}(\zeta)+\mbox{\Large$\chi$}_{2n+1}(\zeta)B_{s-1,2n}(\zeta)+B_{s-1,2n}^\prime(\zeta),\\
\psi_{2n+1}(\zeta)&=\frac{2\psi_{2n}(\zeta)}{x\phi^2(\zeta)}\quad\text{ and }\quad	\mbox{\Large$\chi$}_{2n+1}(\zeta)=\frac{\psi_{2n}^\prime(\zeta)}{\psi_{2n}(\zeta)}	.
\end{align*}
By using mathematical induction we conclude the asymptotic form for odd and even derivatives of $J_\nu(\nu x)$.

\begin{remark}\label{xi_n_exp}
{\em We would like to mention that for $n\in\mathbb{N}_0$, we can express $\psi_n(\zeta)$ as
\begin{align*}
\psi_n(\zeta)=\frac{2^n}{x^n\phi^{2n-1}(\zeta)}.
\end{align*}
Also, we can write $\mbox{\Large$\chi$}_{n+1}(\zeta)$ as
\begin{equation}\label{xi_exp}
\mbox{\Large$\chi$}_{n+1}(\zeta)=\frac{\psi_n^\prime(\zeta)}{\psi_n(\zeta)}=-\frac{n}{x}-(2n-1)\frac{\phi^\prime(\zeta)}{\phi(\zeta)}.
\end{equation}
Therefore by knowing the expansion for $\frac{\phi^\prime(\zeta)}{\phi(\zeta)}$, we can write the asymptotic expansion of $\mbox{\Large$\chi$}_n(\zeta).$ Taking logarithm on both sides of \eqref{phi_exp}, we obtain that
\begin{equation}\label{log_phi}
\log(\phi(\zeta))=\frac{1}{4}\left(\log(4\zeta)-\log(1-x^2)\right).
\end{equation}
We now recall the relation between $x$ and $\zeta$, given by Olver \cite[p. 336]{Ol54},
\begin{equation}\label{x_zeta}
x(\zeta)=1-\frac{\zeta}{2^{\frac{1}{3}}}+\frac{3}{10}\frac{\zeta^2}{2^{\frac{2}{3}}}+\frac{1}{700}\zeta^3+{\ldots}.
\end{equation}
We use the above relation to expand the right hand side of \eqref{log_phi} in term of $\zeta$ as
\begin{align*}
\log(1-x^2)&=\log\left(1-\left(1-\frac{\zeta}{2^{\frac{1}{3}}}+\frac{2}{10}\frac{\zeta^2}{2^{\frac{2}{3}}}+\frac{1}{700}\zeta^3+\ldots\right)^2\right)\\
&= \log\zeta+\log a_0+\frac{a_1}{a_0}\zeta+\left(\frac{a_2}{a_0}-\frac{1}{2}\frac{a_1^2}{a_0^2}\right)\zeta^2+\ldots,
\end{align*}
where
\begin{align*}
a_0=2^{\frac{2}{3}},\ a_1=-\frac{2^{\frac{7}{3}}}{5}\ \mbox{and}\ a_2= \frac{52}{175}.
\end{align*}
Differentiating \eqref{log_phi} with respect to $\zeta$ and then  by using the above expansion for $\log(1-x^2)$, we obtain
\begin{equation}\label{phi_log_der}
\frac{\phi^\prime(\zeta)}{\phi(\zeta)}=\frac{1}{4}\left(-\frac{a_1}{a_0}-2\zeta\left(\frac{a_2}{a_0}-\frac{1}{2}\frac{a_1^2}{a_0^2}\right)+\ldots\right).
\end{equation}
Now, we write the expansion for $\mbox{\Large$\chi$}_{n+1}(\zeta)$ by using the expansion of $\frac{1}{x(\zeta)}$ in term of $\zeta$ as follows
\begin{align*}
\frac{1}{x(\zeta)}&=\frac{1}{1-\left(\frac{\zeta}{2^{1/3}}-\frac{3}{10}\frac{\zeta^2}{2^{2/3}}-\frac{1}{700}\zeta^3-\ldots\right)}\\
&=1+\frac{1}{2^{1/3}}\zeta+\frac{7}{10}\frac{1}{2^{2/3}}\zeta^2+\frac{139}{700}\zeta^3+{\ldots}.
\end{align*}
By using \eqref{xi_exp}, \eqref{phi_log_der} and above expansion, we obtain that
\begin{align*}
\mbox{\Large$\chi$}_{n+1}(\zeta)&=-\frac{n}{x}-(2n-1)\frac{\phi^\prime(\zeta)}{\phi(\zeta)}\\
&=\left(-n-\frac{n}{2^{1/3}}\zeta-\frac{7n}{10}\frac{1}{2^{2/3}}\zeta^2-\frac{139n}{700}\zeta^3-\ldots\right)-
\frac{1}{4}\left(-\frac{(2n-1)a_1}{a_0}-2(2n-1)\left(\frac{a_2}{a_0}-\frac{1}{2}\frac{a_1^2}{a_0^2}\right)\zeta+\ldots\right)\\
&=\frac{(2n-1)a_1}{4a_0}-n+\left(\frac{(2n-1)}{2}\left(\frac{a_2}{a_0}-\frac{1}{2}\frac{a_1^2}{a_0^2}\right)-\frac{n}{2^{1/3}}\right)\zeta+\ldots\\
&=\chi_{0,n}+\chi_{1,n}\zeta+\ldots,
\end{align*}
where 
$$\chi_{0,n}=\frac{(2n-1)a_1}{4a_0}-n\ \ \mbox{and}\ \ \chi_{1,n}=\frac{(2n-1)}{2}\left(\frac{a_2}{a_0}-\frac{1}{2}\frac{a_1^2}{a_0^2}\right)-\frac{n}{2^{1/3}}.$$}
\end{remark}

\subsection{Asymptotic expansion of zeros of $J_\nu^{(n)}(\nu x)$} In order to obtain the final result, we follow the procedure of Olver \cite{Ol54}, which was also used by Wong and Lang \cite{WL90}. In view of \eqref{2n1_der_sim}, we assume that
\begin{equation}\label{W_zeta}
W(\zeta)=\frac{\nu^{1/3}J_\nu^{(2n)}(\nu x)}{\psi_{2n}(\zeta)}=\airya\left(\nu^{\frac{2}{3}}\zeta\right)\sum_{s=0}^{\infty}\frac{A_{s,2n}(\zeta)}{\nu^{2s}}+
\frac{\airya'\left(\nu^{\frac{2}{3}}\zeta\right)}{\nu ^\frac{4}{3}}\sum_{s=0}^{\infty}\frac{B_{s,2n}(\zeta)}{\nu^{2s}}.
\end{equation}
By using \eqref{zeta_asym}, we write
\begin{align*}
\zeta_{\nu, k}=\nu^{-\frac{2}{3}}a_k+\eta_k,
\end{align*}
where $\eta_k=\mathcal{O}\left(\frac{1}{\nu^\alpha}\right)$. To avoid unnecessary complexity, let us denote
\begin{equation}\label{beta_epsilon}
\beta=\nu^{-\frac{2}{3}}a_k\ and\ \epsilon=\eta_k.
\end{equation}
Given the one-to-one relation between $x$ and $\zeta,$ and in view of \eqref{W_zeta}, we have that $W(\beta+\epsilon)=0,$ which can be written as
\begin{equation}\label{main_eqn_W}
W(\beta)+\frac{\epsilon}{1!}W^\prime(\beta)+\frac{\epsilon^2}{2!}W^{\prime\prime}(\beta)+\ldots=0.
\end{equation}
These higher order derivatives of $W(\beta)$ can be calculated from \eqref{W_zeta} as follows. Observe that \eqref{W_zeta} has the same structure as the first expansion in \cite[eq. 7.6]{Ol54} with $m=0.$ Therefore, by analogous reasoning, it follows that
\begin{equation}\label{W_der}
\left\{
\begin{aligned}
W^{(2m)}(\zeta) &\sim \nu^{2m}\airya\left(\nu^{\frac{ 2}{3}}\zeta\right)\sum_{r=0}^{\infty}\frac{A_{r,2n}^{2m}(\zeta)}{\nu^{2r}}+
\nu^{2m-\frac{4}{3}}\airya'\left(\nu^{\frac{2}{3}}\zeta\right)\sum_{r=0}^{\infty}\frac{B_r^{2m,2n}(\zeta)}{\nu^{2r}} \\
W^{(2m+1)}(\zeta) &\sim \nu^{2m}\airya\left(\nu^{\frac{ 2}{3}}\zeta\right)\sum_{r=0}^{\infty}\frac{A_{r,2n}^{2m}(\zeta)}{\nu^{2r}}+
\nu^{2m-\frac{4}{3}}\airya'\left(\nu^{\frac{2}{3}}\zeta\right)\sum_{r=0}^{\infty}\frac{B_r^{2m,2n}(\zeta)}{\nu^{2r}}
\end{aligned},
\right.
\end{equation}
where $A_{k,2n}^0=A_{k,2n},$ $B_{k,2n}^{0}=B_{k,2n}$ and
\begin{equation}\label{EF_der}
\left\{
\begin{aligned}
A_{r,2n}^{2m}&=\frac{d}{d\zeta}A_{r-1,2n}^{2m-1}+\zeta B_r^{2m-1,2n},\quad A_r^{2m+1,2n}=\frac{d}{d\zeta}A_{r,2n}^{2m}+\zeta B_{r,2n}^{2m,2n},\\
B_r^{2m,2n}&= A_{r,2n}^{2m-1}+\frac{d}{d\zeta}B_{r,2n}^{2m-1},\quad B_r^{2m+1}= A_r^{2m,2n}+\frac{d}{d\zeta}B_{r-1}^{2m},\\
\end{aligned}
\right.
\end{equation}
with $A_{-1,2n}^{2m-1}=B_{-1,2n}^{2m}=0$. Furthermore, note that $\beta$, see \eqref{beta_epsilon}, act as zero of $\airya\left(\nu^{\frac{2}{3}}\zeta\right)$. In order to obtain $W^{(2m)}(\beta)\text{ and }W^{(2m+1)}(\beta)$, we replace $\zeta$ by $\beta$ in \eqref{W_der}, that is
\begin{equation}\label{W_beta}
\left\{
\begin{aligned}
W^{(2m)}(\beta) &\sim \nu^{2m-\frac{4}{3}}\airya'\left(a_k\right)\sum_{r=0}^{\infty}\frac{B_{r,2n}^{2m}(\beta)}{\nu^{2r}}, \\
W^{(2m+1)}(\beta) &\sim \nu^{2m+\frac{2}{3}}\airya'\left(a_k\right)\sum_{r=0}^{\infty}\frac{B_{r,2n}^{2m+1}(\beta)}{\nu^{2r}}.
\end{aligned}
\right.
\end{equation}
Let us denote
\begin{equation}\label{f_exp}
\left\{
\begin{aligned}
f_{2m}(\beta)&=\frac{W^{(2m)}(\beta)}{\nu^{2m-\frac{4}{3}}\airya'(a_k)} \sim \sum_{r=0}^{\infty}\frac{B_{r,2n}^{2m}(\beta)}{\nu^{2r}},\\
f_{2m+1}(\beta)&=\frac{W^{(2m+1)}(\beta)}{\nu^{2m+\frac{2}{3}}\airya'(a_k)}\sim \sum_{r=0}^{\infty}\frac{B_{r,2n}^{2m+1}(\beta)}{\nu^{2r}},
\end{aligned}
\right.
\end{equation}
for large $\nu$. In other words, for $l\in\{0,1,2,\ldots\}$, we can write
\begin{equation}\label{f_F_rel}
f_l(\beta)\sim B_{0,2n}^l(\beta)+\frac{B_{1,2n}^l}{\nu^2}+\frac{B_{2,2n}^l(\beta)}{\nu^4}+\ldots, \quad \text{as} \nu \to \infty.
\end{equation}
Additionally, \eqref{f_exp} can be used to express the derivative of $W(\beta)$ in terms of $f_l(\beta)$ as follows
\begin{equation}\label{w_exp}
\left\{
\begin{aligned}
W^{(2m)}(\beta)&=f_{2m}(\beta)\nu^{2m-\frac{4}{3}}\airya'(a_k),\\
W^{(2m+1)}(\beta)&=f_{2m+1}(\beta)\nu^{2m+\frac{2}{3}}\airya'(a_k),
\end{aligned}
\right.
\end{equation}
Substituting \eqref{w_exp} in \eqref{main_eqn_W}, we obtain
\begin{equation*}\label{key}
f_0+\frac{\epsilon \nu^2}{1!}f_1+\frac{\epsilon^2\nu^2}{2!}f_2+\frac{\epsilon^3\nu^4}{3!}f_3+\ldots=0.
\end{equation*}
From the above equation we can write
\begin{equation}\label{epsln_kapa}
\epsilon\sim \frac{\kappa_1}{\nu^2}+\frac{\kappa_2}{\nu^4}+\frac{\kappa_3}{\nu^6}+\ldots,
\end{equation}
where $\kappa_1,\kappa_2,\ldots,$ depend on $\nu,$ and the first two coefficients on the right are given by
\begin{equation}\label{kappa_exp}
\kappa_1=-\frac{f_0}{f_1}\ \ \mbox{and}\ \ \kappa_2=-\frac{f_2}{2f_1}\kappa_1^2-\frac{f_3}{6f_1}\kappa_1^3.
\end{equation}
We now express $\kappa_1,\kappa_2,\ldots$ in terms of $f_i's$, by using \eqref{kappa_exp} and \eqref{f_F_rel}, as follows
\begin{align*}
\kappa_1=-\frac{f_0}{f_1}&=-\frac{B_{0,2n}^0(\beta)+\frac{B_{1,2n}^0}{\nu^2}+\frac{B_{2,2n}^0(\beta)}{\nu^4}+\ldots}
{B_{0,2n}^1(\beta)+\frac{B_{1,2n}^1}{\nu^2}+\frac{B_{2,2n}^1(\beta)}{\nu^4}+\ldots}\\
&=-\frac{B_{0,2n}^0(\beta)+\frac{B_{1,2n}^0(\beta)}{\nu^2}+\frac{B_{2,2n}^0(\beta)}{\nu^4}+\ldots}
{B_{0,2n}^1(\beta)\left(1+\frac{1}{\nu^2}\frac{B_{1,2n}^1(\beta)}{B_{0,2n}^1(\beta)}+\frac{1}{\nu^4}\frac{B_{2,2n}^1(\beta)}{B_{0,2n}^1(\beta)}+\ldots\right)}\\
&=-\frac{1}{B_{0,2n}^1(\beta)}\left(B_{0,2n}^0(\beta)+\frac{B_{1,2n}^0(\beta)}{\nu^2}+\frac{B_{2,2n}^0(\beta)}{\nu^4}+\ldots\right)\\
&\hspace*{5em}\left(1-\left(\frac{1}{\nu^2}\frac{B_{1,2n}^1(\beta)}{B_{0,2n}^1(\beta)}+\frac{1}{\nu^4}\frac{B_{2,2n}^1(\beta)}{B_{0,2n}^1(\beta)}+\ldots\right)+\ldots\right)\\
&=\frac{B_{0,2n}^0(\beta)}{B_{0,2n}^1(\beta)}+\frac{1}{\nu^2B_{0,2n}^1(\beta)}\left(B_{1,2n}^0(\beta)-\frac{B_{1,2n}^1(\beta)B_{0,2n}^0(\beta)}{B_{0,2n}^1(\beta)}\right)+\ldots\\	
\end{align*}
as $\nu\to \infty$.
Similarly, we can expand the expressions $\kappa_2, \kappa_3, \ldots$ in \eqref{kappa_exp} to obtain series containing $B_i$. Furthermore, substituting all these expressions in \eqref{epsln_kapa} and arranging the terms we obtain that
\begin{equation}\label{epsilon_exp}
\epsilon\sim \frac{\epsilon_1}{\nu^2}+\frac{\epsilon_2}{\nu^4}+\frac{\epsilon_3}{\nu^6}+\ldots,
\end{equation}
where
\begin{equation*}\label{key}
\epsilon_1=\frac{B_{0,2n}^0(\beta)}{B_{0,2n}^1(\beta)},\ \ \epsilon_2=-\frac{1}{B_{0,2n}^1(\beta)}\left(B_{1,2n}(\beta)+B_{1,2n}^1(\beta)\epsilon_1+\frac{1}{2}B_{0,2n}^2(\beta)\epsilon_1^2+\frac{1}{6}B_{0,2n}^3(\beta)\epsilon_1^3\right), \ \ {\ldots}.
\end{equation*}
Using the fact that $x$ and $\zeta$ are related one-to-one and in view of \eqref{W_zeta}, for fixed $\zeta_{\nu k}$, which is zero of $W(\zeta)$ in \eqref{W_zeta}, we can write
\begin{align*}
j_{\nu,k}^{(2n)}=\nu x_{\nu k}=\nu x(\zeta_{\nu k})=\nu x(\beta+\epsilon),
\end{align*}
by using \eqref{beta_epsilon}. Similar to \eqref{main_eqn_W}, by using the above relation we obtain that 
\begin{equation}\label{2n_x_exp}
j_{\nu,k}^{(2n)}=\nu x(\beta)+\frac{1}{1!}\nu \epsilon x^\prime(\beta)+\frac{1}{2!}\nu \epsilon^2x^{\prime\prime}(\beta)+\ldots.
\end{equation}
Here the expansion of $x(\zeta)$ is given by \eqref{x_zeta}. Moreover, differentiating the above equation with respect to $\zeta$ and substituting the derivatives in \eqref{2n_x_exp} we obtain
\begin{align*}
j_{\nu,k}^{(2n)}=\nu \left(1-\frac{\beta}{2^{1/3}}+\frac{2}{10}\frac{\beta^2}{2^{2/3}}+\frac{1}{700}\beta\right)+\nu \epsilon \left(-\frac{1}{2^{1/3}}+\frac{6}{10}\frac{1}{2^{2/3}}\beta+\frac{3}{700}\beta^2\right)+\ldots,
\end{align*}
where $\beta$ and $\epsilon$ are given by \eqref{beta_epsilon} and \eqref{epsilon_exp}.

The procedure to derive asymptotic expansion for zeros of $J_\nu^{(2n+1)}(x)$ is very similar to the case of even derivatives of Bessel functions. Let us assume
\begin{equation}\label{W1_zeta}
W_{2n+1}(\zeta)=\frac{\nu^{2/3}J_\nu^{(2n+1)}(\nu x)}{\psi_{2n+1}(\zeta)}=\airya'\left(\nu^{\frac{2}{3}}\zeta\right)\sum_{s=0}^{\infty}\frac{B_{s,2n}(\zeta)}{\nu^{2s}}+
\frac{\airya\left(\nu^{\frac{2}{3}}\zeta\right)}{\nu ^\frac{2}{3}}\sum_{s=0}^{\infty}\frac{A_{s,2n}(\zeta)}{\nu^{2s}}.
\end{equation}
From \eqref{zeta_asym1} we write
\begin{align*}
\zeta_{\nu, k}^{(2n+1)}=\nu^{-\frac{2}{3}}a_k^\prime+\eta_k,
\end{align*}
where $\eta_k=\mathcal{O}\left(\frac{1}{\nu^\alpha}\right)$. To avoid unnecessary complexity, let us denote
\begin{equation*}\label{beta_epsilon1}
\beta_1=\nu^{-\frac{2}{3}}a_k^\prime\text{ and }\epsilon_1=\eta_k.
\end{equation*}
In view of the one-to-one relation between $x$ and $\zeta,$ and by using \eqref{W_zeta}, we arrive at $W_1(\beta+\epsilon_1)=0,$ which can be written as
\begin{equation*}\label{main_eqn_W1}
	W_1(\beta_1)+\frac{\epsilon_1}{1!}W_1^\prime(\beta_1)+\frac{\epsilon_1^2}{2!}W_1^{\prime\prime}(\beta_1)+\ldots=0.
\end{equation*}
The rest of the steps are very similar to the case of zero of even derivatives of Bessel functions, so we omit the details.

In particular, the asymptotic expansion for zeros of third and fourth derivatives of Bessel functions is given by
\begin{align*}
j_{\nu,k}^{(3)}&=\nu \left(1-\frac{\beta}{2^{1/3}}+\frac{2}{10}\frac{\beta^2}{2^{2/3}}+\frac{1}{700}\beta\right)+\nu \epsilon \left(-\frac{1}{2^{1/3}}+\frac{6}{10}\frac{1}{2^{2/3}}\beta+\frac{3}{700}\beta^2\right)+\ldots\\
&=\nu-\frac{a_k}{2^{1/3}}\nu^{1/3}+\frac{2^{1/3}a_k^2}{10}\nu^{-1/3}+\ldots
\end{align*}
and
\begin{align*}
j_{\nu,k}^{(4)}&=\nu \left(1-\frac{\beta_1}{2^{1/3}}+\frac{2}{10}\frac{\beta_1^2}{2^{2/3}}+\frac{1}{700}\beta_1\right)+\nu \epsilon_1 \left(-\frac{1}{2^{1/3}}+\frac{6}{10}\frac{1}{2^{2/3}}\beta_1+\frac{3}{700}\beta_1^2\right)+\ldots\\
&=\nu-\frac{a_k^\prime}{2^{1/3}}\nu^{1/3}+\frac{2^{1/3}(a_k^\prime)^2}{10}\nu^{-1/3}+\frac{(a_k^\prime)^3}{700}\nu^{-1}+\cdots.
\end{align*}

We note that, unlike the case of the asymptotic expansion for the zeros of the second derivative of Bessel functions, the effect of
$\epsilon$ in the asymptotic expansion of $j_{\nu,k}^{(3)}$ appears only after the fourth term.

\section{\bf Discussion and future work}

In this paper, we extended several known results on higher-order derivatives of Bessel functions and their zeros and we obtained two different asymptotic expansions for the zeros $j_{\nu,k}^{(n)}$ of $J_\nu^{(n)}(x)$. More precisely, we established a McMahon-type expansion for the case when $k \to \infty$ with fixed $\nu$, including an explicit error bound, and another expansion for the case when $\nu \to \infty$ with fixed $k$. Our proofs were based on Hethcote's methods \cite{He70}, combined with general properties of the zeros of higher-order derivatives. The techniques used in this paper, together with several auxiliary generalizations, are expected to be useful in addressing a variety of related problems. For instance, the expression for the $n$-th derivative of Bessel functions obtained in section \ref{J^n=J+J'} can be applied to extend the results of Wong and Lee \cite{WL91}. Moreover, the same approach in section \ref{J^n=J+J'} can be adapted to derive analogous expressions for the derivatives of modified Bessel functions. Further, by modifying the method of N\.asell \cite{Na78} and using these derivative formulas, one can obtain rational bounds for ratios of modified Bessel functions and their derivatives.


\begin{thebibliography}{width}
		
\bibitem[\bf BKP18]{BKP18}
\textsc{\'A. Baricz, C.G. Kokologiannaki, T.K. Pog\'any}, Zeros of Bessel function derivatives, {\em Proc. Amer. Math. Soc.} 146 (2018) 209--222.
		
\bibitem[\bf BS18]{BS18}
\textsc{\'A. Baricz, S. Singh}, Zeros of some special entire functions, {\em Proc. Amer. Math. Soc.} 146 (2018) 2207--2216.
		
\bibitem[\bf DL25]{DL25}
\textsc{D.K. Dimitrov, Y.C. Lun}, Monotonicity of zeros of derivatives of Bessel functions, {\em J. Approx. Theory} 305 (2025) Art. 106102.
		
\bibitem[\bf DYL6]{DYL6}
\textsc{T. M. Dunster, M. Yedlin, K. Lam}, Resonance and the late coefficients in the scattered	field of a dielectric circular cylinder, {\em Anal. Appl.} {10} (2006) 311--333.
		
\bibitem[\bf EL01]{EL01}
\textsc{\'A. Elbert, A. Laforgia}, A conjecture on the zeros of Bessel functions, {\em J. Comput. Appl. Math.} {133} (2001) p. 683.
		
\bibitem[\bf ELR93]{ELR93}
\textsc{E. Elizalde, S. Leseduarte, A. Romeo}, Sum rules for zeros of Bessel functions and an application to spherical Aharonov-Bohm quantum bags, {\em J. Phys. A} {26} (1993) 2409--2419.

\bibitem[\bf FO99]{FO99}
\textsc{B.R. Fabijonas, F.W.J. Olver}, On the reversion of an asymptotic expansion and the zeros of the Airy functions, {\em SIAM Rev.} {41} (1999) 762--773.

\bibitem[\bf FS08]{FS08}
\textsc{E.M. Ferreira, J. Sesma}, Zeros of the Macdonald function of complex order, {\em J. Comput. Appl. Math.} {211} (2008) 223--231.		
		
\bibitem[\bf FP93]{FP93}
\textsc{K.-J. F\"orster, K. Petras}, Inequalities for the zeros of ultraspherical polynomials and Bessel functions, {\em Z. Angew. Math. Mech.} {73} (1993) 232--236.	
			
\bibitem[\bf FKP24]{FKP24}
\textsc{A.D. Frantzis, C.G. Kokologiannaki, E.N. Petropoulou}, On the zeros of derivatives of Bessel functions, {\em Integral Transforms Spec. Funct.} {35} (2024) 330--338.
		
\bibitem[\bf Ga56]{Ga56}
\textsc{L. Gatteschi}, Limitazione degli errori nelle formule asintotiche per le funzione speciali, {\em Univ. Politec. Torino Rend. Sem. Mat.} 16 (1956) 147--97.
		
\bibitem[\bf He70]{He70}
\textsc{H.W. Hethcote}, Error bounds for asymptotic approximations of zeros of transcendental functions, {\em SIAM J. Math. Anal.} 1 (1970) 147--152.
		
\bibitem[\bf Ke14]{Ke14}
\textsc{M.K. Kerimov}, Studies on the zeros of Bessel functions and methods for their computation, {\em Comput. Math. Math. Phys.} 54(9) (2014) 1337--1388.
		
\bibitem[\bf La89]{La89}
\textsc{T. Lang}, Asymptotic analysis of the inflection points of the Bessel function $J_\nu(x)$, M.Sc. Thesis, University of Manitoba, 1989.
		
\bibitem[\bf LZ07]{LZ07}
\textsc{ H. Liu, J. Zou}, Zeros of the Bessel and spherical Bessel functions and their applications for uniqueness in inverse acoustic obstacle scattering, {\em IMA J. Appl. Math.} {72} (2007) 817--831.

\bibitem[\bf Mc1895]{Mc1895}
\textsc{J. Mcmahon}, On the roots of the Bessel and certain related functions, {\em Ann. of Math.} 9 (1895) 1--6.
		
\bibitem[\bf Na78]{Na78}
\textsc{I. N\.asell}, Rational bounds for ratios of modified Bessel functions, {\em SIAM J. Math. Anal.} {9} (1978) 1--11.
		
\bibitem[\bf Ne21]{Ne21}
\textsc{G. Nemes}, Proofs of two conjectures on the real zeros of the cylinder and Airy functions, {\em SIAM J. Math. Anal.} {53} (2021) 4328--4349.	
		
\bibitem[\bf Ol62]{Ol62}
\textsc{F.W.J. Olver}, Tables of Bessel Functions of Moderate or Large Orders, Math. Tables Nat. Phys. Lab., London, 1962.
		
\bibitem[\bf Ol63]{Ol63}
\textsc{F.W.J. Olver}, Error bounds for first approximations in turning point problems, {\em J. Math. Anal. Appl.} 1 (1963) 748--772.
		
\bibitem[\bf Ol64]{Ol64}
\textsc{F.W.J. Olver}, Error bounds for asymptotic expansions in turning point problems, {\em J. Soc. Indust. Appl. Math.} 12 (1964) 200--214.
		
\bibitem[\bf Ol74]{Ol74}
\textsc{F.W.J. Olver}, Asymptotics and Special Functions, Academic Press, New York-London, 1974.
		
\bibitem[\bf Ol54]{Ol54}
\textsc{F.W.J. Olver}, The asymptotic expansion of Bessel functions of large order, {\em Phil. Trans. R. Soc. Lond. A} 247 (1954) 328--368.
		
\bibitem[\bf OLBC10]{OLBC10}
\textsc{F.W.J. Olver, D.W. Lozier, R.F. Boisvert, C.W. Clark}, NIST Handbook of Mathematical Functions, National Institute of Standards and Technology, Cambridge University Press, Cambridge, 2010.
		
\bibitem[\bf Pa72]{Pa72}
\textsc{R. Parnes}, Complex zeros of the modified Bessel function Kn(Z), {\em Math. Comp.} {26} (1972) 949--953.	
		
\bibitem[\bf Sk02]{Sk02}
\textsc{E.A. Skelton}, A new identity for the infinite product of zeros of Bessel functions of the first kind or their derivatives, {\em J. Math. Anal. Appl.} 1 (2002) 338--344.
		
\bibitem[\bf Wa44]{Wa44}
\textsc{G.N. Watson}, A Treatise of the Theory of Bessel Functions, Cambridge University Press, Cambridge, 1944.
		
\bibitem[\bf WL90]{WL90}
\textsc{R. Wong, T. Lang}, Asymptotic behaviour of the inflection points of Bessel functions, {\em Proc. Roy. Soc. London Ser. A} 431 (1990) 509--518.
		
\bibitem[\bf WL91]{WL91}
\textsc{R. Wong, T. Lang}, On the point of inflection of {B}essel functions of positive order. II, {\em Can. J. Math.} 43 (1991) 628--651.
	
\end{thebibliography}
\end{document}